\theoremstyle{definition}
\newtheorem{theorem}{Theorem}[section]
\newtheorem{cor}[theorem]{Corollary}
\newtheorem{prop}[theorem]{Proposition}
\newtheorem{lemma}[theorem]{Lemma}
\newtheorem{rem}[theorem]{Remark}
\newtheorem{definition}[theorem]{Definition}
\newcommand{\R}{\mathbb{R}}
\newcommand{\N}{\mathbb{N}}
\newcommand{\C}{\mathbb{C}}
\newcommand{\K}{\mathbb{K}}
\newcommand{\e}{\varepsilon}
\newcommand{\eps}{\varepsilon}
\newcommand{\tensor}{{\hat \otimes}_{\pi}}
\newcommand{\tens}{\otimes}
\DeclareMathOperator{\re}{Re}
\DeclareMathOperator{\im}{Im}
\DeclareMathOperator{\NA}{NA}
\renewcommand{\leq}{\leqslant}
\renewcommand{\geq}{\geqslant}
\renewcommand{\geq}{\geqslant}
\begin{document}

\title{Strong subdifferentiability and local Bishop-Phelps-Bollob\'as properties}

\author[Dantas]{Sheldon Dantas}
\address[Dantas]{Department of Mathematics, Faculty of Electrical Engineering, Czech Technical University in Prague, Technick\'a 2, 166 27 Prague 6, Czech Republic \newline
	\href{http://orcid.org/0000-0001-8117-3760}{ORCID: \texttt{0000-0001-8117-3760}  }}
\email{\texttt{gildashe@fel.cvut.cz}}

\author[Kim]{Sun Kwang Kim}
\address[Kim]{Department of Mathematics, Chungbuk National University, 1 Chungdae-ro, Seowon-Gu, Cheongju, Chungbuk 28644, Republic of Korea \newline
	\href{http://orcid.org/0000-0002-9402-2002}{ORCID: \texttt{0000-0002-9402-2002}  }}
\email{\texttt{skk@chungbuk.ac.kr}}

\author[Lee]{Han Ju Lee}
\address[Lee]{Department of Mathematics Education, Dongguk University - Seoul, 04620 (Seoul), Republic of Korea \newline
	\href{http://orcid.org/0000-0001-9523-2987}{ORCID: \texttt{0000-0001-9523-2987}  }
}
\email{\texttt{hanjulee@dongguk.edu}}

\author[Mazzitelli]{Martin Mazzitelli}
\address[Mazzitelli]{Universidad Nacional del Comahue, CONICET, Departamento de Matem\'atica, Facultad de Econom\'ia y Administraci\'on, Neuqu\'en, Argentina.}
\email{\texttt{martin.mazzitelli@crub.uncoma.edu.ar}}

\begin{abstract} It has been recently presented in \cite{DKLM} some local versions of the Bishop-Phelps-Bollob\'as type property for operators. In the present article, we continue studying these properties for multilinear mappings. We show some differences between the local and uniform versions of the Bishop-Phelps-Bollob\'as type results for multilinear mappings, and also provide some interesting examples which shows that this study is not just a mere generalization of the linear case. We study those properties for bilinear forms on $\ell_p \times \ell_q$ using the strong subdifferentiability of the norm of the Banach space $\ell_p \hat{\tens}_{\pi} \ell_{q}$. Moreover, we present necessary and sufficient conditions for the norm of a Banach space $Y$ to be strongly subdifferentiable through the study of these properties for bilinear mappings on $\ell_1^N \times Y$.

\end{abstract}

\thanks{The first author was supported by the project OPVVV CAAS CZ.02.1.01/0.0/0.0/16\_019/0000778. The second author was partially supported by Basic Science Research Program through the National Research Foundation of Korea(NRF) funded by the Ministry of Education, Science and Technology (NRF-2017R1C1B1002928). The third author was supported by the Research program through the National Research Foundation of Korea funded by the Ministry of Education, Science and Technology (NRF-2016R1D1A1B03934771). The fourth author was partially supported by CONICET PIP 11220130100329CO}

\subjclass[2010]{Primary 46B04; Secondary  46B07, 46B20}
\keywords{Banach space; norm attaining operators; Bishop-Phelps-Bollob\'{a}s property}

\maketitle

\thispagestyle{plain}

\section{Introduction}

In Banach space theory, it is well-known that the set of all norm attaining continuous linear functionals defined on a Banach space $X$ is dense in its topological dual space $X^*$. This is the famous Bishop-Phelps theorem \cite{BP}. In 1970, this result was strengthened by Bollob\'as, who proved a quantitative version in the following sense: if a norm-one linear functional $x^*$ almost attains its norm at some $x$, then, near to $x^*$ and $x$, there are, respectively, a new norm-one functional $y^*$ and a new point $y$ such that $y^*$ attains its norm at $y$ (see \cite[Theorem 1]{Bol}). Nowadays, this result is known as the Bishop-Phelps-Bollob\'as theorem and it has been used as an important tool in the study of Banach spaces and operators. For example, it was used to prove that the numerical radius of a continuous linear operator is the same as its adjoint.

It is natural to ask whether the Bishop-Phelps and Bishop-Phelps-Bollob\'as theorems hold also for bounded linear operators. In 1963, Lindenstrauss gave the first example of a Banach space $X$ such that the set of all norm attaining operators on $X$ is {\it not} dense in the set of all bounded linear operators (see \cite[Proposition 5]{Lind}). On the other hand, he studied some conditions on the involved Banach spaces in order to get a Bishop-Phelps type theorem for operators. For instance, he proved that the set of all operators whose second adjoint attain their norms is dense, so, in particular, if $X$ is a reflexive Banach space, then the set of all norm attaining operators is dense for arbitrary range spaces (actually, this result was extended by Bourgain in \cite[Theorem 7]{Bou} by showing that the Radon-Nikod\'ym property implies the same result). This topic has been considered by many authors and we refer the reader to the survey paper \cite{Acosta-RACSAM} for more information and background about denseness of norm attaining operators. On the other hand, M. Acosta, R. Aron, D. Garc\'{\i}a, and M. Maestre studied the vector-valued case of the Bishop-Phelps-Bollobas theorem and introduced \cite{AAGM} the {\it Bishop-Phelps-Bollob\'{a}s property}. 

Now we introduce the notation and necessary preliminaries. Let $N$ be a natural number. We use capital letters $X, X_1, \ldots, X_N, Y$ for Banach spaces over a scalar field $\K$ which can be the field of the real numbers $\R$ or the field of the complex numbers $\C$. The closed unit ball and the unit sphere of $X$ are denoted by $B_X$ and $S_X$, respectively. The topological dual space of $X$ is denoted by $X^*$ and $\mathcal{L} (X_1, \ldots, X_N; Y)$ stands for the set of all bounded $N$-linear mappings from $X_1 \times \cdots \times X_N$ into $Y$. For the convenience, if $X_1 = \ldots = X_N = X$, then we use the shortened notation $\mathcal{L} (^N X; Y)$. When $N = 1$, we have the set of all bounded linear operators from $X$ into $Y$, which we denote simply by $\mathcal{L}(X; Y)$.


We say that an $N$-linear mapping $A \in \mathcal{L} (X_1, \ldots, X_N; Y)$ {\it attains its norm} if there exists $(z_1 ,\ldots, z_N ) \in S_{X_1} \times \cdots \times S_{X_N}$ such that $\|A(z_1 , \ldots, z_N )\| =\|A\|$, where $\|A\| = \sup \|A(x_1, \ldots, x_N)\|$, the supremum being taken over all the elements $(x_1, \ldots, x_N) \in S_{X_1} \times \cdots \times S_{X_N}$. We denote by $\NA(X_1, \ldots, X_N; Y)$ the set of all norm attaining $N$-linear mappings.

\begin{definition}[\cite{ABGM, CS, DGKLM, KLM}] We say that $(X_1, \ldots,  X_N; Y)$ has the {\it Bishop-Phelps-Bollob\'as property for $N$-linear mappings} ({\bf BPBp} for $N$-linear mappings, for short) if given $\e > 0$, there exists $\eta(\e) > 0$ such that whenever $A \in \mathcal{L} (X_1, \ldots, X_N; Y)$ with $\|A\| = 1$ and $\left(x_1, \ldots, x_N\right) \in S_{X_1} \times \cdots \times S_{X_N}$ satisfy
\begin{equation*}
\left\|A\left(x_1 , \ldots, x_N \right)\right\| > 1 - \eta(\e),
\end{equation*}
there are $B \in \mathcal{L} (X_1, \ldots, X_N; Y)$ with $\|B\| = 1$ and $\left(z_1, \ldots, z_N\right) \in S_{X_1} \times \cdots \times S_{X_N}$ such that
\begin{equation*}
\left\|B\left(z_1 , \ldots, z_N \right)\right\| = 1, \ \ \ \max_{1 \leq j \leq N} \|z_j  - x_j \| < \e, \ \ \ \mbox{and} \ \ \|B - A\| < \e.
\end{equation*}
\end{definition}
\noindent
When $N = 1$, we simply say that the pair $(X; Y)$ satisfies the {\bf BPBp} (see \cite[Definition 1.1]{AAGM}). Note that the Bishop-Phelps-Bollob\'as theorem asserts that the pair $(X; \K)$ has the {\bf BPBp} for every Banach space $X$. It is immediate to notice that if the pair $(X; Y)$ has {\bf BPBp}, then $\overline{\NA(X;Y)}=\mathcal{L}(X;Y)$. However, the converse is {\it not} true even for finite dimensional spaces. Indeed, for a finite dimensional Banach space $X$, the fact that $B_X$ is compact implies that every bounded linear operator on $X$ attains its norm, but it is known that there is some Banach space $Y_0$ so that the pair $(\ell_1^2; Y_0)$ {\it fails} the {\bf BPBp} (see \cite[Example 4.1]{ACKLM}). This shows that the study of the {\bf BPBp} is not just a trivial extension of that of the density of norm attaining operators.

Similar to the case of operators, there were a lot of attention to the study of the denseness of norm attaining bilinear mappings. It was proved that, in general, there is no Bishop-Phelps theorem for bilinear mappings (see \cite[Corollary 4]{AAP}). Moreover, it is known that $\overline{\NA(^2 L^1[0,1];\mathbb{K})} \neq \mathcal{L}(^2 L^1[0,1];\mathbb{K})$ (see \cite[Theorem 3]{C}), even though $\overline{\NA(L^1[0,1];L_\infty[0,1])} = \mathcal{L}(L^1[0,1];L_\infty[0,1])$ (see \cite{FinetPaya}). This result is interesting since the Banach space $\mathcal{L}(X_1,X_2;\mathbb{K})$ is isometrically isomorphic to $\mathcal{L}(X_1;X_2^*)$ via the canonical isometry $A\in \mathcal{L}(X_1,X_2;\mathbb{K})\longmapsto T_A \in \mathcal{L}(X_1;X_2^*)$ given by $[T_A(x_1)](x_2) = A(x_1,x_2)$. Concerning the {\bf BPBp} for bilinear mappings, it is known that $(\ell_1,\ell_1;\mathbb{K})$ fails the {\bf BPBp} for bilinear mappings (see \cite{CS}) but the pair $(\ell_1, Y)$ satisfies the {\bf BPBp} for many Banach spaces $Y$, including $\ell_\infty$ (see \cite[Section 4]{AAGM}). We refer the papers \cite{ABGM, DGKLM, KLM} for more results on the {\bf BPBp} for multilinear mappings.


Very recently, a stronger property than the {\bf BPBp} was defined and studied.
\begin{definition}[\cite{DKL, DKKLM}]
We say that $(X_1, \ldots,  X_N; Y)$ has the {\it Bishop-Phelps-Bollob\'as point property for $N$-linear mappings} ({\bf BPBpp} for $N$-linear mappings, for short) if given $\e > 0$, there exists $\eta(\e) > 0$ such that whenever $A \in \mathcal{L} (X_1, \ldots, X_N; Y)$ with $\|A\| = 1$ and $\left(x_1, \ldots, x_N\right) \in S_{X_1} \times \cdots \times S_{X_N}$ satisfy
\begin{equation*}
\left\|A\left(x_1 , \ldots, x_N \right)\right\| > 1 - \eta(\e),
\end{equation*}
there is $B \in \mathcal{L} (X_1, \ldots, X_N; Y)$ with $\|B\| = 1$ such that
\begin{equation*}
\left\|B\left(x_1 , \ldots, x_N \right)\right\| = 1 \ \ \ \mbox{and} \ \ \  \|B - A\| < \e.
\end{equation*}
\end{definition}
\noindent Clearly, the {\bf BPBpp} implies the {\bf BPBp} but the converse is {\it not} true in general. Actually, if the pair $(X; Y)$ has the {\bf BPBpp} for some Banach space $Y$, then $X$ must be uniformly smooth (see \cite[Proposition 2.3]{DKL}). Also, it was proved in \cite{DKL} that the pair $(X; \K)$ has the {\bf BPBpp} if and only if $X$ is uniformly smooth. In both papers \cite{DKL, DKKLM} the authors presented such differences between these two properties and found many positive examples having {\bf BPBpp}.

On the other hand, one may think about a ``dual" version of the {\bf BPBpp} where, instead of fixing the point, we fix the operator.
\begin{definition}[\cite{D, DKLM}]
We say that $(X_1, \ldots,  X_N; Y)$ has the {\it Bishop-Phelps-Bollob\'as operator property for $N$-linear mappings} ({\bf BPBop} for $N$-linear mappings, for short) if given $\e > 0$, there exists $\eta(\e) > 0$ such that whenever $A \in \mathcal{L} (X_1, \ldots, X_N; Y)$ with $\|A\| = 1$ and $\left(x_1, \ldots, x_N\right) \in S_{X_1} \times \cdots \times S_{X_N}$ satisfy
\begin{equation*}
\left\|A\left(x_1 , \ldots, x_N \right)\right\| > 1 - \eta(\e),
\end{equation*}
there is $(z_1,\dots, z_N) \in S_{X_1}\times \cdots\times S_{X_N}$ such that
\begin{equation*}
\left\|A\left(z_1 , \ldots, z_N \right)\right\| = 1 \ \ \ \mbox{and} \ \ \   \max_{1 \leq j \leq N}  \|x_j - z_j\| < \e.
\end{equation*}
\end{definition}

It was proved in \cite{KL} that the pair $(X; \K)$ has the {\bf BPBop} if and only $X$ is uniformly convex. So, in the scalar-valued case, these two properties are dual from each other; that is, $(X; \K)$ has the {\bf BPBpp} if and only if $(X^*; \K)$ has the {\bf BPBop}. Nevertheless, it is known that there is {\it no} version for bounded linear operators of this property. Indeed, in \cite{DKKLM2}, it is proved that for $\dim(X), \dim(Y) \geq 2$, the pair $(X; Y)$ {\it always  fails} the {\bf BPBop}. Hence, there is no hope for this ``uniform" property, which lead us to consider a ``local type" of it as in \cite{D, Sain, T}. In these papers, the function $\eta$ in the definition of the {\bf BPBop} depends not only on $\e$ but also on a fixed norm one operator $T$, and some positive results are obtained, which are different from the uniform case when $\eta$ depends just on $\e$.

This motivated the current authors to study, in \cite{DKLM}, all of the aforementioned properties in this local sense. In the paper, local versions of the {\bf BPBpp} and {\bf BPBop} (and also the {\bf BPBp}) were addressed for linear operators. We give the precise definitions for $n$-linear mappings in section 2.  It turned out that these local properties are quite different from the corresponding uniform ones, as in the case of the {\bf BPBop} (see \cite[Section 5]{DKLM}). For instance, there is a connection between those properties and the subdifferentiability of the norm of the spaces (see \cite[Theorem 2.3]{DKLM}). For the ``local {\bf BPBpp}", $\eta$ depends on a point $x \in S_X$ and $\e > 0$, we have the following results.

\begin{theorem}[\cite{DKLM}] \label{resume} Consider the following pairs of Banach spaces $(X; \K)$ when $X$ is
\begin{itemize}
	\item[(a)] $c_0$ or
	\item[(b)] the predual of Lorentz sequence space $d_{*}(w, 1)$ or
	\item[(c)] the space $VMO$ (which is the predual of the Hardy space $H^1$) or
	\item[(d)] a finite dimensional space,
\end{itemize}
and also the following pairs
\begin{itemize}
	\item[(e)] $(\ell_1^N; L_p(\mu))$ for $1 < p < \infty, N \in \N$, and
	\item[(f)] $(c_0; L_p(\mu))$ for $1 \leq p < \infty$.
\end{itemize}
Then, all of them satisfy this ``local {\bf BPBpp}".
\end{theorem}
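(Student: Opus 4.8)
The plan is to separate the statement into its scalar-valued part, items (a)--(d), where the range is $\K$, and its vector-valued part, items (e)--(f), and to reduce each group to a structural property of the spaces involved. For the scalar pairs $(X;\K)$ I would invoke the characterization \cite[Theorem 2.3]{DKLM}, which identifies the ``local {\bf BPBpp}'' of $(X;\K)$ with the strong subdifferentiability (SSD) of the norm of $X$, i.e.\ the requirement that for every $x\in S_X$ the one-sided limit
\begin{equation*}
\lim_{t\to 0^+}\frac{\|x+th\|-\|x\|}{t}
\end{equation*}
exist uniformly in $h\in B_X$. This turns the whole problem for (a)--(d) into the verification that the relevant norm is SSD. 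Case (d) is then immediate: on a finite dimensional space $S_X$ is compact, the difference quotients are monotone in $t$ and uniformly Lipschitz in $h$, so a Dini-type argument upgrades pointwise convergence of the quotients to uniform convergence, and the norm is automatically SSD.

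For (a), (b), and (c) I would check SSD directly from the concrete description of each norm. For $c_0$, fix $x=(x_n)\in S_{c_0}$; since $x_n\to 0$ the supremum defining $\|x\|$ is a maximum attained on a finite set of coordinates, and for small $t$ the quantity $\max_n|x_n+th_n|$ is controlled by that finite block, which produces the uniform one-sided derivative $\max_{|x_n|=1}\re(\sgn(x_n)h_n)$. For the predual $d_*(w,1)$ of the Lorentz sequence space and for $VMO$ I would use the analogous explicit formulas (the predual norm governed by finitely many ``leading'' rearranged terms, the $VMO$ norm by its mean-oscillation description), reducing in each case to a finite, hence uniformly controlled, collection of extremal functionals. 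This is the step where I would lean on the known descriptions of the norming sets of these spaces.

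For the vector-valued pairs the scalar characterization no longer applies, and here I expect the genuine difficulty. For (e), $(\ell_1^N;L_p(\mu))$ with $1<p<\infty$, I would use that an operator $T\colon\ell_1^N\to L_p(\mu)$ is determined by $Te_1,\dots,Te_N$ with $\|T\|=\max_i\|Te_i\|$, combined with the \emph{uniform convexity} of $L_p(\mu)$: if $x=\sum_i x_ie_i\in S_{\ell_1^N}$ and $\|Tx\|=\|\sum_i x_iTe_i\|$ is close to $1=\|T\|$, then, after absorbing signs, $\sum_i|x_i|=1$ forces (by uniform convexity applied to the convex combination) the relevant unit vectors $Te_i/\|Te_i\|$ over $\supp(x)$ to be mutually close to a common unit vector $u_0$. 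Setting $Se_i=u_0$ for $i\in\supp(x)$ and leaving the other columns untouched yields $\|S\|=1$, $\|Sx\|=\|u_0\|=1$, and $\|S-T\|=\max_{i\in\supp(x)}\|u_0-Te_i\|$ small, with a modulus depending on $x$ and $\e$. For (f), $(c_0;L_p(\mu))$ with $1\le p<\infty$, uniform convexity is unavailable at $p=1$, so I would instead use the SSD structure of $c_0$ on the domain side: a point $x\in S_{c_0}$ depends essentially on finitely many coordinates, which lets me confine the required perturbation of $T$ to that finite block and correct the error using the monotone, order-continuous lattice norm of $L_p(\mu)$.

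The main obstacle, and where I would concentrate the effort, is making the dependence $\eta=\eta(x,\e)$ explicit and uniform over the tail in the vector-valued arguments: the naive alignment or block perturbation can raise the operator norm above $1$ and must be rescaled without destroying the attained equality $\|Sx\|=1$, so the delicate bookkeeping is to carry out the correction on the complementary coordinates while keeping $\|S-T\|<\e$.
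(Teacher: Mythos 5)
The first thing to say is that the paper you are reading offers no proof of Theorem~\ref{resume} at all: it is quoted from \cite{DKLM} as background, so the only meaningful comparison is with the arguments of that earlier paper. Your overall reduction is the right one and, for half of the items, coincides with it. Passing from the local {\bf BPBpp} of $(X;\K)$ to strong subdifferentiability of the norm via \cite[Theorem 2.3]{DKLM} (i.e.\ Franchetti--Pay\'a) disposes of (a) and (d) exactly as intended: your Dini argument for (d) and the finite-block argument for $c_0$ are both correct. Your treatment of (e) by uniform convexity of $L_p(\mu)$, $1<p<\infty$, with $\eta$ proportional to $\min_{i\in\supp(x)}|x_i|$, is essentially \cite[Proposition 2.10]{DKLM} and reappears in the present paper in the guise of Proposition~\ref{AHSP1} and the results around it; note that for the $\ell_1^N$ domain no rescaling issue arises, since $\|S\|=\max_i\|Se_i\|$ is automatically $1$ once the untouched columns have norm at most $1$.

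The genuine gaps are in (b), (c) and (f), which is precisely where you defer to ``known descriptions''. For (b) and (c) the claimed mechanism --- that the norm is governed by finitely many leading terms, so SSD follows as for $c_0$ --- is not substantiated, and it is not how these cases are actually handled: the set of indices (resp.\ extremal functionals) realizing the norm of a point of $d_*(w,1)$ or of $VMO$ need not be stable under perturbation of the point, and the proofs in the literature instead go through the $w^*$-Kadec--Klee property of the dual spaces $d(w,1)$ and $H^1$ combined with \cite[Proposition 2.6]{DKLM} --- the same device the present paper uses for $\ell_p \hat{\otimes}_{\pi} \ell_q$ in the proof of Theorem~\ref{lp}. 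For (f) the only genuinely hard case is $p=1$, where uniform convexity is unavailable; your sketch (confine the perturbation to a finite block of coordinates and ``correct the error using the lattice norm'') does not identify the actual mechanism, which in the known proofs rests on the AHSP-type structure of $L_1(\mu)$ and a careful renormalization after the block modification --- exactly the step you flag as delicate but do not carry out. As written, (a), (d), (e) are proved, while (b), (c), (f) are plans rather than proofs.
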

In this paper we continue the study of these local properties, emphasizing in the multilinear setting. Following the notation  in \cite{DKLM}, we use the symbol {\bf L}$_{p, p}$ for the ``local {\bf BPBpp}", when $\eta$ depends on a point $x \in S_X$, and {\bf L}$_{o, o}$ for the ``local {\bf BPBop}", when $\eta$ depends on an operator $T \in S_{\mathcal{L}(X, Y)}$ (see Definition \ref{defs} below). In the next section, we give the proper definitions and first results. Among others, we obtain the following results (see Proposition~\ref{bil3} and the comment below Corollary~\ref{failBPBop}).
\begin{itemize}
\item  If $(X_1, \ldots ,X_N;Y)$ has property {\bf L}$_{p, p}$ (or {\bf L}$_{o, o}$), then so does $(X_i;\mathbb{K})$ for every $1\leq i\leq N$.
\item There exist (finite dimensional) Banach spaces $X_1,\dots, X_N, Y$ such that $(X_1,  \ldots, X_N;Y)$ has the {\bf L}$_{p, p}$ (respectively, {\bf L}$_{o,o}$) but fails the {\bf BPBpp} (respectively, {\bf BPBop}).
\end{itemize}
We also focus on the bilinear case when the domains are $\ell_p$-spaces. In that sense, we obtain the following results (see Theorem~\ref{lp} and Remark~\ref{remark lp}).
\begin{itemize}
\item If  $2 <p, q < \infty$, then $(\ell_p, \ell_q; \K)$ has the {\bf L}$_{p, p}$.
\item If $1<p,q<\infty$, then $(\ell_p, \ell_q; \mathbb{K})$ has the {\bf L}$_{o,o}$ if and only if $pq>p+q$. Hence, there exist spaces $\ell_p, \ell_q$ such that $(\ell_p,\ell_q;\K)$ fails the bilinear {\bf L}$_{o, o}$, while $(\ell_p;\K)$ and $(\ell_q;\K)$ have the linear {\bf L}$_{o, o}$, since both are uniformly smooth. 
\end{itemize}

In the proof of Theorem~\ref{lp} we use a tensor product to prove that $(\ell_p, \ell_q; \K)$ has the {\bf L}$_{p, p}$ for $2 <p, q < \infty$. As a consequence, we show that the norm of $\ell_p \hat{\tens}_{\pi} \ell_q$ is strongly subdifferentiable for $2 < p,q < \infty$. However if $p^{-1}+q^{-1} \geq 1$ or one of the indices $p,q$ takes the value 1 or $\infty$, then its norm is not strongly subdifferentiable.
In Section 3, motivated by the geometric property \emph{approximate hyperplane series property} (AHSP, for short) in \cite{AAGM, ABGM}, we get a characterization of  strong subdifferentiability. The AHSP characterizes a Banach space $Y$ for which $(\ell_1;Y)$ and $(\ell_1, Y;\K)$ have the {\bf BPBp} (in the linear and bilinear case, respectively). Although the pairs $(\ell_1;Y)$ and $(\ell_1, Y;\K)$ do not have the {\bf L}$_{p, p}$ (since $\ell_1$ is not SSD), we may ask if $(\ell_1^N;Y)$ and $(\ell_1^N, Y;\K)$ have it.
In Proposition~\ref{SSDe} we prove that the strong subdifferentiability of the norm of a Banach space $Y$ is equivalent to such characterization. As a consequence of this characterization, we prove that $(\ell_1^N, Y; \K)$ has the {\bf L}$_{p, p}$ for bilinear forms if and only if the norm of a Banach space $Y$ is strongly subdifferentiable. Using similar ideas, we characterize the pairs $(\ell_1^N; Y)$ having the  {\bf L}$_{p, p}$ for operators, generalizing Theorem~\ref{resume}.(e). As a consequence of this last characterization, we prove that if a family $\{y_{\alpha} \}_{\alpha} \subset S_Y$ is uniformly strongly exposed with corresponding functionals $\{f_{\alpha}\}_{\alpha} \subset S_{Y^*}$, then $(\ell_1^N; Y)$ has the {\bf L}$_{p, p}$ for operators whenever $\{f_{\alpha}\}_{\alpha}$  is a norming subset for the Banach space $Y$.


\section{The {\bf L}$_{p,p}$ and the {\bf L}$_{o,o}$ for $N$-linear mappings}

We start this section by giving the precise definitions of the local Bishop-Phelps-Bollob\'as properties for $N$-linear mappings. These are the analogous of \cite[Definition 2.1]{DKLM}.

\begin{definition}	\label{defs} {\bf (a)} We say that  $(X_1, \ldots,  X_N; Y)$ has the {\bf L}$_{p, p}$ if given $\e > 0$ and $(x_1,\dots ,x_N) \in S_{X_1} \times \cdots \times S_{X_N}$, there is $\eta(\e, (x_1,\dots,x_N)) > 0$ such that whenever $A \in \mathcal{L}( X_1, \ldots,  X_N; Y)$ with $\|A\| = 1$ satisfies
	\begin{equation*}
		\|A(x_1,\dots,x_N)\| > 1 - \eta(\e, (x_1,\dots,x_N)),
	\end{equation*}
	there is $B \in \mathcal{L}(X_1,\dots,X_N;Y)$ with $\|B\| = 1$ such that
	\begin{equation*}
		\|B(x_1,\dots,x_N)\| = 1 \ \ \ \mbox{and} \ \ \ \|B - A\| < \e.	
	\end{equation*}
\noindent
{\bf (b)} We say that $(X_1,\dots,X_N;Y)$ has the {\bf L}$_{o,o}$ if given $\e > 0$ and $A \in \mathcal{L}(X_1,\dots,X_N;Y)$ with $\|A\| = 1$, there is $\eta(\e, A) > 0$ such that whenever $(x_1,\dots,x_N) \in S_{X_1} \times \cdots \times S_{X_N}$ satisfies
	\begin{equation*}
		\|A(x_1,\dots,x_N)\| > 1 - \eta(\e, A),
	\end{equation*}
	there is $(z_1,\dots,z_N) \in S_{X_1} \times \cdots \times S_{X_N}$ such that
	\begin{equation*}
		\|A(z_1,\dots,z_N)\| = 1 \ \ \mbox{and} \ \ \max_{1 \leq j \leq N}  \|x_j - z_j\| < \e.	
	\end{equation*}
\end{definition}

Let us observe that if $(X_1, \ldots,  X_N; Y)$  satisfies the {\bf L}$_{o,o}$, then every $A \in \mathcal{L}(X_1,\dots,X_N;Y)$ attains its norm and, consequently, all the Banach spaces $X_i$'s must be reflexive. Indeed, if one of them is not reflexive, say $X_k$, by James theorem, there is $z_k^* \in S_{X_k^*}$ such that $|z_k^*(x_k)| < 1$ for all $x_k \in S_{X_k}$. Now, taking arbitrary $y_0\in S_Y$ and $z_i^* \in S_{X_i^*}$ for each $i\neq k$ and defining $A \in \mathcal{L}(X_1,...,X_N;Y)$ by $A(x_1,...,x_N) := \left(\Pi_{1\leq i \leq N} z_i^*(x_i)\right) y_0$, we see that $A$ never attains its norm. Thus, in order to look for positive examples about the {\bf L}$_{o,o}$, we must assume, at least, that $X_1, \ldots, X_N$ are all reflexive Banach spaces.

It was proved in \cite[Theorem 2.4]{D} that if $X$ is a finite dimensional Banach space, then the pair $(X; Y)$ has the {\bf L}$_{o,o}$ for every Banach space $Y$. By the similar proof, this can be generalized for $N$-linear mappings. However it does not hold  for the {\bf L}$_{p,p}$ in general. Indeed, suppose that $Y$ is a  strictly convex Banach space and that the pair $(\ell_1^2; Y)$ has the {\bf L}$_{p,p}$. 
Then $Y$ is uniformly convex by \cite[Proposition 3.2]{DKLM}. So,  choosing a strictly convex space  $Y_0$ which is not uniformly convex, the pair $(\ell_1^2, Y_0)$ fails the {\bf L}$_{p,p}$ although $\ell_1^2$ is $2$-dimensional. In the case that  $Y$ is also finite dimensional, then we have a positive result as the following proposition. The proof is analogous to the operator case  in  \cite[Proposition 2.8]{DKLM} and omitted. 
\begin{prop} \label{fifi}	
Let $N \in \N$ and let $X_1, \ldots, X_N$ be finite dimensional Banach spaces. Then,
\begin{enumerate}
\item[(a)] $(X_1,\dots,X_N;Y)$ has the {\bf L}$_{o,o}$ for every Banach space $Y$;
\item[(b)] $(X_1,\dots,X_N;Y)$ has the {\bf L}$_{p,p}$ for every finite dimensional Banach space $Y$.
\end{enumerate}
\end{prop}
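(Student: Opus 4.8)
Both statements follow from routine compactness arguments, but the finite-dimensionality hypotheses enter in different places. The plan is to argue by contradiction in each case, extracting a convergent subsequence from a suitable compact set and invoking continuity of the relevant evaluation maps.

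For part (a), I would fix $A \in \mathcal{L}(X_1,\dots,X_N;Y)$ with $\|A\| = 1$ and $\e > 0$, and suppose toward a contradiction that no $\eta > 0$ works. Taking $\eta = 1/n$, I obtain tuples $(x_1^n,\dots,x_N^n) \in S_{X_1}\times\cdots\times S_{X_N}$ with $\|A(x_1^n,\dots,x_N^n)\| > 1 - 1/n$ but $\max_{1\leq j\leq N} \|x_j^n - z_j\| \geq \e$ for every norm-attaining tuple $(z_1,\dots,z_N)$ of $A$. Since each $X_i$ is finite dimensional, the product $S_{X_1}\times\cdots\times S_{X_N}$ is compact, so after passing to a subsequence I may assume $x_j^n \to x_j^0 \in S_{X_j}$ for each $j$. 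Continuity of $A$ then yields $\|A(x_1^0,\dots,x_N^0)\| = 1$, so $(x_1^0,\dots,x_N^0)$ is itself a norm-attaining tuple of $A$; choosing it as $(z_1,\dots,z_N)$ gives $\max_j\|x_j^n - x_j^0\| < \e$ for large $n$, the desired contradiction. This argument imposes no condition on $Y$, matching the statement.

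For part (b), I would fix the tuple $(x_1,\dots,x_N)\in S_{X_1}\times\cdots\times S_{X_N}$ and $\e > 0$, and again argue by contradiction. If no $\eta$ works, then taking $\eta = 1/n$ produces maps $A_n \in \mathcal{L}(X_1,\dots,X_N;Y)$ with $\|A_n\| = 1$, $\|A_n(x_1,\dots,x_N)\| > 1 - 1/n$, and $\|B - A_n\| \geq \e$ for every norm-one $B$ attaining its norm at $(x_1,\dots,x_N)$. The decisive point is that, since all of $X_1,\dots,X_N,Y$ are finite dimensional, the space $\mathcal{L}(X_1,\dots,X_N;Y)$ is finite dimensional and its closed unit ball is compact in the operator norm. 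Hence a subsequence of $(A_n)$ converges to some $A_0$; as both $A\mapsto\|A\|$ and $A\mapsto\|A(x_1,\dots,x_N)\|$ are continuous, I get $\|A_0\| = 1$ and $\|A_0(x_1,\dots,x_N)\| = 1$. Then $B := A_0$ meets the required conclusion while $\|B - A_n\| \to 0$, contradicting $\|B - A_n\|\geq\e$.

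The only genuinely delicate point, and the source of the differing hypotheses, is the choice of compact set. In (a) the evaluation point is perturbed while $A$ is held fixed, so compactness of the domain spheres alone suffices and $Y$ is irrelevant. In (b) the operator is perturbed while the point is held fixed, so one needs compactness of the unit ball of $\mathcal{L}(X_1,\dots,X_N;Y)$, which forces $Y$ to be finite dimensional. This is precisely where the hypothesis on $Y$ is used and cannot be removed, as the discussion preceding the proposition (via a strictly convex but not uniformly convex space $Y_0$) already demonstrates.
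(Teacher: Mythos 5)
Your proof is correct, and it is essentially the standard compactness argument that the paper omits (it refers to the analogous operator case in \cite[Proposition 2.8]{DKLM}): in (a) you extract a convergent subsequence from the compact product of unit spheres, and in (b) from the compact unit sphere of the finite-dimensional space $\mathcal{L}(X_1,\dots,X_N;Y)$. Your closing remark correctly identifies why finite-dimensionality of $Y$ is needed only in (b).
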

It is known that if the pair $(X; Y)$ satisfies the {\bf BPBpp} or the {\bf BPBop} or the {\bf L}$_{p, p}$ for some Banach space $Y$, then so does $(X; \K)$ (see \cite[Proposition 2.9]{D}, \cite[Proposition 2.7]{DKL} and \cite[Proposition 2.7]{DKLM}, respectively). The same happens with property {\bf L}$_{o, o}$. Indeed, given $\e > 0$ and $x^* \in S_{X^*}$, we construct, for a fixed $y_0 \in S_Y$, the operator $T \in \mathcal{L}(X, Y)$ given by $T(x) := x^*(x)y_0$ for all $x \in X$ and then we set $\eta(\e, x^*) := \eta(\e, T) > 0$. If $x_0 \in S_X$ is such that
\begin{equation*}
|x^*(x_0)| > 1 - \eta(\e, x^*),
\end{equation*}
then $\|T(x_0)\| > 1 - \eta(\e, T)$. Thus, there is $x_1 \in S_X$ such that
\begin{equation*}
\|T(x_1)\| = |x^*(x_1)| = 1 \ \ \mbox{and} \ \  \|x_1 - x_0\| < \e.
\end{equation*}
Therefore, $(X, \K)$ has the {\bf L}$_{o, o}$. By using the same arguments, we can extend those results for $N$-linear mappings. In the proof, we use the canonical isometry between $\mathcal{L}(X_1,...,X_N;\mathbb{K})$ and $\mathcal{L}(X_1,...,X_{N-1};X_N^*)$ to deduce item (b) below.
\begin{prop}\label{bil3} Let $\mathcal{P}$ be one of the properties {\bf BPBpp}, {\bf BPBop}, {\bf L}$_{o,o}$ or {\bf L}$_{p,p}$.
	\begin{enumerate}
		\item[(a)] If $(X_1, \ldots ,X_N;Y)$ has the property $\mathcal{P}$, then so does $(X_1, \ldots ,X_N;\mathbb{K})$.
		\item[(b)] If $(X_1, \ldots ,X_N;\mathbb{K})$ has the property $\mathcal{P}$ and $\mathcal{P}$ is not  {\bf L}$_{p, p}$, then so does $(X_1, \ldots ,X_{N-1};X_N^*)$.
		\item[(c)]  If $(X_1, \ldots ,X_N;Y)$ has the property $\mathcal{P}$, then so does $(X_i;\mathbb{K})$ for every $1\leq i\leq N$.
	\end{enumerate}
\end{prop}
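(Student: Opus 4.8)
The plan is to prove the three items separately, in each case exploiting a canonical passage between scalar- and vector-valued multilinear maps, and to keep throughout the distinction between the \emph{point-type} properties ({\bf BPBpp}, {\bf L}$_{p,p}$), where one fixes the tuple and perturbs the map, and the \emph{operator-type} properties ({\bf BPBop}, {\bf L}$_{o,o}$), where one fixes the map and perturbs the tuple.

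For item (a) I would fix $y_0\in S_Y$ and associate to each scalar form $A$ the $Y$-valued map $\tilde{A}(u_1,\dots,u_N):=A(u_1,\dots,u_N)\,y_0$, which has the same norm and satisfies $\|\tilde{A}(u_1,\dots,u_N)\|=|A(u_1,\dots,u_N)|$. For the operator-type properties this is essentially immediate: the hypothesis $|A(x_1,\dots,x_N)|>1-\eta$ transfers to $\tilde{A}$, the vector property returns a tuple $(z_1,\dots,z_N)$ with $\|\tilde{A}(z_1,\dots,z_N)\|=1$, whence $|A(z_1,\dots,z_N)|=1$ with the $z_j$ close to the $x_j$, exactly as in the {\bf L}$_{o,o}$ computation carried out before the statement. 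The point-type case is more delicate and is where I expect the main difficulty. Given $A$ with $\|A\|=1$ and $|A(x_1,\dots,x_N)|>1-\eta$, I would first multiply $A$ by a unimodular scalar so that $A(x_1,\dots,x_N)=:\lambda$ is a positive real in $(1-\eta,1]$, a harmless reduction since undoing the rotation at the end preserves both $\|B-A\|$ and norm attainment at the fixed tuple. Applying the vector property to $\tilde{A}$ yields $\tilde{B}$ with $\|\tilde{B}\|=1$, $\|\tilde{B}(x_1,\dots,x_N)\|=1$ and $\|\tilde{B}-\tilde{A}\|<\e'$; I then set $w:=\tilde{B}(x_1,\dots,x_N)\in S_Y$, pick $w^*\in S_{Y^*}$ with $w^*(w)=1$, and define $B:=w^*\circ\tilde{B}$. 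By construction $\|B\|\leq 1$ and $|B(x_1,\dots,x_N)|=1$, so $B$ attains its norm at the fixed tuple. The remaining task, bounding $\|B-A\|$, is the crux: the identity $B(u)-A(u)=w^*(\tilde{B}(u)-\tilde{A}(u))+A(u)\,(w^*(y_0)-1)$ reduces everything to showing that $|w^*(y_0)-1|$ is small, and this holds because $w$ lies within $\e'$ of $\lambda y_0$ with $\lambda$ close to $1$, which forces $w^*(y_0)$ close to $1$. A short bookkeeping then lets me choose $\e'$ and $\eta$ small enough that $\|B-A\|<\e$.

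For item (b) I would use the canonical isometry $\mathcal{L}(X_1,\dots,X_N;\K)\cong\mathcal{L}(X_1,\dots,X_{N-1};X_N^*)$ sending $A$ to $T_A$ with $[T_A(x_1,\dots,x_{N-1})](x_N)=A(x_1,\dots,x_N)$. Starting from $T=T_A$ with $\|T\|=1$ and a tuple $(x_1,\dots,x_{N-1})$ with $\|T(x_1,\dots,x_{N-1})\|_{X_N^*}>1-\eta$, I would choose $x_N\in S_{X_N}$ almost realizing this dual norm, so that $|A(x_1,\dots,x_N)|>1-\eta$, apply the scalar property to $A$, and read the conclusion back through the isometry: for the operator-type properties the returned tuple $(z_1,\dots,z_N)$ gives $\|T(z_1,\dots,z_{N-1})\|_{X_N^*}\geq|A(z_1,\dots,z_N)|=1$ with the first $N-1$ coordinates close; for {\bf BPBpp} the returned form $B$ yields $S:=T_B$ with $\|S-T\|=\|B-A\|<\e$ attaining at $(x_1,\dots,x_{N-1})$. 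The exclusion of {\bf L}$_{p,p}$ is explained precisely by this scheme: the auxiliary vector $x_N$ depends on $A$ (equivalently on $T$), so the scalar modulus $\eta(\e,(x_1,\dots,x_N))$ cannot be made to depend only on $(x_1,\dots,x_{N-1})$ uniformly over all $T$ sharing that partial tuple, whereas for the other three properties $\eta$ is either uniform in the tuple or of the operator type.

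For item (c) I would first invoke item (a) to reduce to the scalar-valued $N$-linear property for $(X_1,\dots,X_N;\K)$, and then collapse to a single variable by freezing the remaining coordinates. Concretely, for a functional $x_i^*$ on $X_i$ I would take norming data $x_j\in S_{X_j}$, $x_j^*\in S_{X_j^*}$ with $x_j^*(x_j)=1$ for $j\neq i$ (using Hahn--Banach in the point-type cases, and reflexivity of the $X_j$ in the operator-type cases, which is forced by the hypothesis) and form the elementary form $A(u_1,\dots,u_N):=\prod_{j}x_j^*(u_j)$, of norm one. Running the $N$-linear property on $A$ with the tuple having $x_i$ in the $i$-th slot, the conclusion $\prod_{j}|x_j^*(z_j)|=1$ forces $|x_i^*(z_i)|=1$ (operator-type), while restricting the perturbed form $B$ to the $i$-th variable, $u\mapsto B(x_1,\dots,u,\dots,x_N)$, produces the desired norm-one functional on $X_i$ attaining at $x_i$ and $\e$-close to $x_i^*$ (point-type). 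In every case the modulus for $(X_i;\K)$ is inherited from the one supplied by the $N$-linear problem.
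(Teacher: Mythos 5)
Your proposal is correct. For items (a) and (b) you follow essentially the same route as the paper: the lift $A\mapsto A(\cdot)\,y_0$ for (a) (with the composition $w^*\circ\tilde B$ against a functional norming $\tilde B(x_1,\dots,x_N)$ to return to scalar forms in the point-type cases --- this is exactly the content of the propositions the paper defers to, and your bookkeeping via $B(u)-A(u)=w^*(\tilde B(u)-\tilde A(u))+A(u)(w^*(y_0)-1)$ closes it correctly), and the canonical isometry $\mathcal{L}(X_1,\dots,X_N;\K)\cong\mathcal{L}(X_1,\dots,X_{N-1};X_N^*)$ for (b), including the right diagnosis of why the scheme breaks for {\bf L}$_{p,p}$ there. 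The only genuine divergence is in item (c): the paper argues directly with elementary tensor forms only for {\bf L}$_{p,p}$, and handles the other three properties by iterating (a) and (b) to peel off one variable at a time; you instead run the elementary-tensor construction uniformly for all four properties, freezing the coordinates $j\neq i$ at norming pairs and reading off the conclusion in the $i$-th slot. Your unified argument is slightly more economical, avoids the $(N-1)$-fold iteration, and makes the dependence of the resulting modulus transparent in each case. One small remark: reflexivity of the $X_j$ is not actually needed to produce the norming pairs in the operator-type cases --- choosing the points $x_j\in S_{X_j}$ first and then $x_j^*$ by Hahn--Banach suffices --- though invoking it does no harm since {\bf L}$_{o,o}$ forces it anyway.
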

\begin{proof}
The proof of (a) and (b) is sketched above.
To prove (c), it suffices to show that the pair $(X_1;\mathbb{K})$ has property $\mathcal{P}$ whenever $(X_1, \ldots ,X_N;Y)$ does. Suppose first that $\mathcal{P}$ is not  {\bf L}$_{p, p}$. Then, by item (a), we have that $(X_1, \ldots ,X_N;\mathbb{K})$ has property $\mathcal{P}$ and, in virtue of (b), $(X_1, \ldots ,X_{N-1};X_N^*)$ does. Applying (a) again, we see that $(X_1, \ldots ,X_{N-1};\mathbb{K})$ has property $\mathcal{P}$. That is, if $(X_1, \ldots ,X_N;\mathbb{K})$ has property $\mathcal{P}$, then $(X_1, \ldots ,X_{N-1};\mathbb{K})$ has property $\mathcal{P}$. Repeating this argument $(N-1)$-times, we see that $(X_1;\mathbb{K})$ has property $\mathcal{P}$.

Now, suppose that $(X_1, \ldots ,X_N;Y)$ has property {\bf L}$_{p, p}$. Then, by (a), we have that $(X_1, \ldots ,X_N;\mathbb{K})$ has property {\bf L}$_{p, p}$. Given $\e>0$ and $x_1^0\in S_{X_1}$, we want to see that there is $\eta(\e,x_1^0)>0$ satisfying the definition of property {\bf L}$_{p, p}$ for the pair $(X_1; \mathbb{K})$. Consider $(x_2^0,\dots,x_N^0)\in S_{X_2}\times \cdots \times S_{X_N}$ and $(x_2^*,\dots,x_N^*)\in S_{X_2^*}\times \cdots \times S_{X_N^*}$ such that $x_i^*(x_i^0)=1$, for $i=2, \ldots, N$, and put $\eta(\e,x_1^0):=\eta(\e,(x_1^0,\dots, x_N^0))$, which exists by hypothesis. Suppose that $x_1^*\in S_{X_1^*}$ is such that $|x_1^*(x_1^0)|>1-\eta(\e,x_1^0)$. Then, defining $A(x_1,\dots, x_N)=x_1^*(x_1)x_2^*(x_2)\cdots x_N^*(x_N)$, we have that  $A \in \mathcal{L}( X_1, \ldots,  X_N; \mathbb{K})$, $\|A\|=1$, and
$$
|A(x_1^0,\dots, x_N^0)|>1-\eta(\e,(x_1^0,\dots,x_N^0)).
$$
Consequently, there exists $B \in \mathcal{L}(X_1,...,X_N;\mathbb{K})$ with $\|B\| = 1$ such that $|B(x_1^0,...,x_N^0)| = 1$ and $\|B - A\| < \e$. Therefore, defining $y_1^*\in X_1^*$ by $y_1^*(\cdot)=B(\cdot, x_2^0,\dots, x_N^0)$, we see that
\begin{equation*}
y_1^* \in S_{X_1^*}, \ \ \ |y_1^*(x_1^0)|=1, \ \ \ \mbox{and} \ \ \ \|y_1^*-x_1^*\|\leq  \|B - A\| < \e,
\end{equation*}
which is the desired statement.
\end{proof}
The item (b) above {\it does not} hold for the {\bf L}$_{p, p}$; we provide a counterexample in Remark~\ref{counter}.

Recall that the norm of a Banach space $X$ is said to be {\it strongly subdifferentiable} (SSD, for short) at $x \in S_X$ if the one-sided limit
\begin{equation}\label{def SSD}
\lim_{t \rightarrow 0^+} \frac{\|x + th\| - \|x\|}{t}
\end{equation}
exists uniformly for $h\in B_X$. If \eqref{def SSD} holds for every element in the unit sphere $S_X$, we say that the norm of $X$ is SSD or just $X$ is SSD.
This differentiability is known to be strictly weaker than Fr\'echet differentiability. By the characterization of SSD due to C. Franchetti and R. Pay\'a (see \cite[Theorem 1.2]{FP}), we have that
$(X; \K)$ has the {\bf L}$_{p,p}$ if and only if the norm of $X$ is SSD and, by duality, $(X; \K)$ has the {\bf L}$_{o,o}$ if and only if $X$ is reflexive and the norm of $X^*$ is SSD (see \cite[Theorem 2.3]{DKLM} and also \cite{GMZ} where this fact was already observed).

By using this result and the characterization of property {\bf BPBpp} for the pair $(X;\K)$ given in \cite[Proposition 2.1]{DKL}, we have the following consequences of Proposition \ref{bil3}.
\begin{cor}\label{coro1} Let $N \in \N$ and $X_1, \ldots, X_N$ be Banach spaces.
	\begin{itemize}
	\item[(a)] If $(X_1, \ldots, X_N; Y)$ has the {\bf BPBpp} for some Banach space $Y$, then $X_i$ is uniformly smooth for each $i=1,\ldots,N$.
	\item[(b)] If $(X_1, \ldots, X_N;Y)$ has the {\bf L}$_{p,p}$ for some Banach space $Y$, then $X_i$ is SSD for each $i=1,\ldots,N$.
	\end{itemize}	
\end{cor}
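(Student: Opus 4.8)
The plan is to reduce both items to the previously established scalar-valued characterizations by feeding them into Proposition~\ref{bil3}(c), which is precisely the tool that pushes each of the listed properties down from the full $N$-linear pair $(X_1,\ldots,X_N;Y)$ to the individual factors $(X_i;\K)$. Since that proposition has already been proved for $\mathcal{P}$ equal to {\bf BPBpp} and to {\bf L}$_{p,p}$, essentially all the work is done, and what remains is to name the right scalar characterization in each case.

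For item (a), I would start from the hypothesis that $(X_1,\ldots,X_N;Y)$ has the {\bf BPBpp} and apply Proposition~\ref{bil3}(c) with $\mathcal{P}=$ {\bf BPBpp} to obtain that $(X_i;\K)$ has the {\bf BPBpp} for every $1\le i\le N$. I would then invoke the characterization of the {\bf BPBpp} for scalar-valued maps from \cite[Proposition 2.1]{DKL}, namely that $(X;\K)$ has the {\bf BPBpp} precisely when $X$ is uniformly smooth, and apply it to each $X=X_i$ to conclude that each $X_i$ is uniformly smooth.

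For item (b), the argument runs in parallel: from the assumption that $(X_1,\ldots,X_N;Y)$ has the {\bf L}$_{p,p}$, Proposition~\ref{bil3}(c) with $\mathcal{P}=$ {\bf L}$_{p,p}$ yields that $(X_i;\K)$ has the {\bf L}$_{p,p}$ for each $i$. Combining the Franchetti--Pay\'a characterization \cite[Theorem 1.2]{FP} with \cite[Theorem 2.3]{DKLM}, one has that $(X;\K)$ has the {\bf L}$_{p,p}$ if and only if the norm of $X$ is SSD; applying this with $X=X_i$ gives that each $X_i$ is SSD.

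I do not expect any genuine obstacle here, since the substantive content is entirely carried by Proposition~\ref{bil3}(c) --- which is where the $N$-linear structure and the canonical isometry between $\mathcal{L}(X_1,\ldots,X_N;\K)$ and $\mathcal{L}(X_1,\ldots,X_{N-1};X_N^*)$ are actually used. The corollary itself is merely the specialization $Y\rightsquigarrow\K$ together with a citation, so the only point requiring care is to match the correct scalar characterization (uniform smoothness for the {\bf BPBpp}, SSD for the {\bf L}$_{p,p}$) to the right property.
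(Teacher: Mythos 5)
Your proposal is correct and coincides with the paper's own argument: the corollary is stated as an immediate consequence of Proposition~\ref{bil3}(c) combined with the scalar characterizations (uniform smoothness for the {\bf BPBpp} via \cite[Proposition 2.1]{DKL}, and SSD for the {\bf L}$_{p,p}$ via Franchetti--Pay\'a and \cite[Theorem 2.3]{DKLM}). Nothing is missing.
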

Another consequence of Proposition \ref{bil3} is that, for spaces of dimension greater than 2, there is no {\bf BPBop} for bilinear mappings. Indeed, if $\dim(X), \dim(Y) \geq 2$ and $(X, Y; Z)$ has the {\bf BPBop} for some Banach space $Z$, then by Proposition \ref{bil3}, the pair $(X, Y^*)$ has the {\bf BPBop} for operators and, as we already mentioned in the Introduction, this is not possible. We can deduce the same for $N$-linear mappings.

\begin{cor}\label{failBPBop} Let $N \in \N$. Let $X_i$ be a Banach space with $\dim(X_i) \geq 2$ for $1 \leq i \leq N$. Then, $(X_1, \ldots, X_N; Y)$ fails the {\bf BPBop} for every Banach space $Y$.
\end{cor}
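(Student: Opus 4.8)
The plan is to reduce the multilinear assertion to the purely linear obstruction recorded in the Introduction: the pair $(X;W)$ fails the {\bf BPBop} for operators whenever $\dim(X),\dim(W)\geq 2$ (see \cite{DKKLM2}). The reduction is carried out by iterating the two transfer results of Proposition~\ref{bil3}, namely passing to scalar range via part (a) and dualizing the last variable via part (b), both of which are available here since {\bf BPBop} is not the property {\bf L}$_{p,p}$.

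Concretely, I would argue by contradiction for $N\geq 2$. Assume $(X_1,\ldots,X_N;Y)$ has the {\bf BPBop}. By Proposition~\ref{bil3}(a) the tuple $(X_1,\ldots,X_N;\K)$ then has the {\bf BPBop} as $N$-linear forms. Applying Proposition~\ref{bil3}(b) and the canonical isometry between $\mathcal{L}(X_1,\ldots,X_N;\K)$ and $\mathcal{L}(X_1,\ldots,X_{N-1};X_N^*)$, the $(N-1)$-linear tuple $(X_1,\ldots,X_{N-1};X_N^*)$ inherits the {\bf BPBop}.

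I would then iterate this ``(a) followed by (b)'' step. Given a current tuple with range $X_k^*$, part (a) produces the scalar-valued tuple $(X_1,\ldots,X_{k-1};\K)$, and part (b) applied to the latter dualizes the last remaining variable to yield $(X_1,\ldots,X_{k-2};X_{k-1}^*)$. After finitely many such steps the process descends to the linear pair $(X_1;X_2^*)$, which must therefore have the {\bf BPBop} for operators.

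To close, I would observe that $\dim(X_1)\geq 2$ and, since $\dim(X_2)\geq 2$ forces $\dim(X_2^*)\geq 2$, both spaces in $(X_1;X_2^*)$ have dimension at least $2$; this contradicts \cite{DKKLM2}. I do not expect a genuine obstacle here, since the only substantive input is the cited linear fact and the work is the bookkeeping of the induction. The two points to verify carefully are that each invocation of Proposition~\ref{bil3}(b) is legitimate (that {\bf BPBop} is not {\bf L}$_{p,p}$, and that the range is correctly identified as the dual of the variable being removed), and that the scheme uses $N\geq 2$, the residual linear situation $(X_1;X_2^*)$ being exactly the case handled by \cite{DKKLM2}.
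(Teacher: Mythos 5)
Your argument is correct and is essentially the paper's own proof: the paper likewise deduces the result from Proposition~\ref{bil3}(a) and (b), reducing (in the bilinear case, and then ``the same for $N$-linear mappings'') to the linear pair $(X_1;X_2^*)$ and invoking the failure of the {\bf BPBop} for operators between spaces of dimension at least $2$ from \cite{DKKLM2}. Your explicit bookkeeping of the iteration, and the remark that the statement is really about $N\geq 2$, match the intended argument.
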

 At this point, we can point out some differences between properties {\bf BPBpp} (respectively, {\bf BPBop}) and {\bf L}$_{p,p}$ (respectively, {\bf L}$_{o,o}$). For instance, if $X_i=\ell_1^2$ or $\ell_\infty^2$ and $Y$ is any finite dimensional Banach space, then by Proposition~\ref{fifi} we have that $(X_1,  \cdots, X_N;Y)$ has the {\bf L}$_{p, p}$ (respectively, {\bf L}$_{o,o}$) while, in virtue of Corollary~\ref{coro1}.(a) (respectively, Corollary~\ref{failBPBop}) it fails property {\bf BPBpp} (respectively, {\bf BPBop}).

Next we focus on the bilinear case when the domains are $\ell_p$-spaces. For the part (b) of Theorem~\ref{lp} below we need the following lemma, which gives a converse of Proposition~\ref{bil3} (b) for property {\bf L}$_{o, o}$.
\begin{lemma} \label{uv} Let $X, Y$ be Banach spaces and suppose that $Y$ is uniformly convex. Then $(X, Y; \K)$ has the {\bf L}$_{o, o}$ for bilinear forms if and only if the pair $(X; Y^*)$ has the {\bf L}$_{o, o}$ for operators.
\end{lemma}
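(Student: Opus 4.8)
The statement splits into two implications, and only one carries real content. The plan is to obtain the forward direction essentially for free: if $(X,Y;\K)$ has the {\bf L}$_{o,o}$ for bilinear forms, this is exactly the case $N=2$ of Proposition~\ref{bil3}(b) with $\mathcal{P}=${\bf L}$_{o,o}$ (which is not {\bf L}$_{p,p}$), so $(X;Y^*)$ has the {\bf L}$_{o,o}$ for operators, and the uniform convexity of $Y$ plays no role here. Thus I would devote the work to the converse: assuming $(X;Y^*)$ has the {\bf L}$_{o,o}$ for operators and that $Y$ is uniformly convex, I want to show that $(X,Y;\K)$ has the {\bf L}$_{o,o}$. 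Throughout I identify a bilinear form $A\in\mathcal{L}(X,Y;\K)$ with the operator $T_A\in\mathcal{L}(X;Y^*)$ given by $[T_A(x)](y)=A(x,y)$, which is an isometry, so $\|A\|=\|T_A\|$.

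For the converse I would fix $\e>0$ and a norm-one $A$, write $T=T_A$, and first record two facts about $Y$: it is reflexive, and (the usual reformulation of uniform convexity, obtained by applying the definition to the midpoint $(u+v)/2$) there is a modulus $\delta_0=\delta_0(\e)>0$ such that whenever $f\in S_{Y^*}$ and $u,v\in B_Y$ satisfy $\re f(u)>1-\delta_0$ and $\re f(v)>1-\delta_0$, then $\|u-v\|<\e$. I then apply the operator {\bf L}$_{o,o}$ for $(X;Y^*)$ to $T$ with a precision $\e'\le\min\{\e,\delta_0/4\}$, obtaining $\eta_1=\eta(\e',T)>0$, and set $\eta(\e,A):=\min\{\eta_1,\delta_0/4\}$. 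Now if $(x_0,y_0)\in S_X\times S_Y$ satisfies $|A(x_0,y_0)|>1-\eta(\e,A)$, the first step is immediate, since $\|T(x_0)\|\ge |[T(x_0)](y_0)|=|A(x_0,y_0)|>1-\eta_1$ produces $z\in S_X$ with $\|T(z)\|=1$ and $\|x_0-z\|<\e'\le\e$. By reflexivity the functional $f:=T(z)\in S_{Y^*}$ attains its norm, say $|f(w_0)|=1$, so $|A(z,w_0)|=1=\|A\|$.

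The main obstacle is that this $w_0$ need not be anywhere near $y_0$, and this is precisely where the uniform convexity of $Y$ must enter. To pin $w_0$ down I would first check that $y_0$ itself nearly maximizes $f$: since $\|A\|=1$ and $\|x_0-z\|<\e'$,
\begin{equation*}
|f(y_0)|=|A(z,y_0)|\ge |A(x_0,y_0)|-\|x_0-z\|>1-\eta(\e,A)-\e'>1-\delta_0 .
\end{equation*}
Thus $y_0$ and $w_0$ are unit vectors on which $f$ is nearly $1$ in modulus. Choosing unimodular scalars $\alpha,\beta$ with $\alpha f(y_0)=|f(y_0)|$ and $\beta f(w_0)=1$ makes $\re f(\alpha y_0)>1-\delta_0$ and $\re f(\beta w_0)=1>1-\delta_0$, so the reformulation of uniform convexity yields $\|\alpha y_0-\beta w_0\|<\e$. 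Setting $w:=\bar\alpha\beta w_0\in S_Y$ gives $\|y_0-w\|=\|\alpha y_0-\beta w_0\|<\e$ and $|A(z,w)|=|f(w)|=|f(w_0)|=1$, so $(z,w)$ attains the norm of $A$ with $\max\{\|x_0-z\|,\|y_0-w\|\}<\e$, as required. The only delicate point in the whole argument is the quantifier bookkeeping of the preceding display: the threshold $\eta(\e,A)$ and the operator-precision $\e'$ must both be chosen small relative to $\delta_0(\e)$ so that the estimate lands strictly inside the modulus of convexity; everything else is routine.
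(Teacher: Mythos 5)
Your proof is correct and follows essentially the same route as the paper's: the forward implication via Proposition~\ref{bil3}(b), and for the converse, first using the operator {\bf L}$_{o,o}$ of $(X;Y^*)$ to replace $x_0$ by a nearby $z$ at which $T_A$ attains its norm, then using uniform convexity of $Y$ to replace $y_0$ by a nearby point at which the functional $T_A(z)\in S_{Y^*}$ attains its norm. The only difference is that the paper performs this second step by citing the Kim--Lee theorem that $(Y;\K)$ has the {\bf BPBop}, whereas you re-derive that fact inline from reflexivity of $Y$ and the standard reformulation of uniform convexity (including the unimodular rotation needed in the complex case), which is a correct, slightly more self-contained version of the same argument.
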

\begin{proof} From Proposition \ref{bil3} (b), if  $(X, Y; \K)$ has the {\bf L}$_{o, o}$ then so does $(X; Y^*)$. Hence, we only have to prove the converse.
Let $\e \in (0, 1)$ be given. Since $Y$ is uniformly convex, the pair $(Y;\mathbb{K})$ has the {\bf BPBop} with some $\tilde{\eta}(\e) > 0$ (see \cite[Theorem 2.1]{KL}). This means that if $y^*\in S_{Y^*}$ and  $y\in B_Y$ satisfy $|y^*(y)|>1-\tilde{\eta}(\e)$, then, there exists $z\in S_Y$ such that $|y^*(z)|=1$ and $\|y-z\|<\eps$. Fix $A \in \mathcal{L}(X,Y; \K)$ with $\|A\| = 1$ and take its associated operator $T_A \in S_{\mathcal{L}(X, Y^*)}$. Consider $\xi > 0$ to be such that $2\xi < \min \{\tilde{\eta}(\e), \e\}$ and set
	\begin{equation*}
		\eta(\e, A) :=  \min \{\xi, \eta'(\xi, T_A)\} > 0,	
	\end{equation*}
where $\eta'$ is the function in the definition of {\bf L}$_{o, o}$ for the pair $(X; Y^*)$. Let $(x_0, y_0) \in S_X \times S_Y$ be such that
	\begin{equation*}
		|A(x_0, y_0)| > 1 - \eta(\e, A).	
	\end{equation*}	
	Then, since
\begin{equation*}	
\|T_A(x_0)\|_{Y^*} \geq |T_A(x_0)(y_0)| = |A(x_0, y_0)| > 1 - \eta(\e, A) \geq 1 - \eta'(\xi, T_A),
\end{equation*}
there is $x_1 \in S_X$ such that
	\begin{equation*}
		\|T_A (x_1)\|_{Y^*} = 1 \ \ \ \mbox{and} \ \ \ \|x_1 - x_0\| < \xi < \e.	
	\end{equation*}
Now, since $T_A(x_1) \in S_{Y^*}$ and $y_0 \in S_Y$ satisfy
	\begin{eqnarray*}
		|[T_A(x_1)](y_0)| &\geq& |T_A(x_0)(y_0)| - |T_A(x_1 - x_0)(y_0)| \\
		&\geq& |A(x_0, y_0)| - \|x_1 - x_0\| \\
		&>& 1 - \eta(\e, A) - \xi \\
		&>& 1 - 2\xi > 1 - \tilde{\eta}(\e),
	\end{eqnarray*}
	there is $y_1 \in S_Y$ such that $|[T_A(x_1)](y_1)| = 1$ and $\|y_1 - y_0\| < \e$. Since
\begin{equation*}	
1 = |[T_A(x_1)](y_1)| = |A(x_1, y_1)|, \ \ \|x_1 - x_0\| < \e, \ \ \mbox{and} \ \ \|y_1 - y_0\| < \e,
\end{equation*}
we have proved that $(X,Y; \K)$ has the {\bf L}$_{o,o}$ for bilinear forms, as desired.	
\end{proof}
Denote by $X \tensor Y$ the projective tensor product of the Banach spaces $X$ and $Y$. Recall that the space $\mathcal{L}(X, Y; Z)$ is isometrically isomorphic to $\mathcal{L}( X  \tensor Y; Z)$ (see, for example, \cite[Theorem 2.9]{Rya}).
Recall also the following definition: a dual Banach space $X^*$ has the {\it $w^*$-Kadec-Klee property} if $\|x_{\alpha}-x\|\to 0$ whenever $\|x_{\alpha}\|\to \|x\|$ and $x_\alpha \xrightarrow[]{w^*}\, x$. If this holds for sequences, we say that $X^*$ has the \emph{sequential} {\it $w^*$-Kadec-Klee property}. For some background concerning these properties, see \cite{BorVan, HajTal}.  It is worth mentioning that if the unit ball $B_{X^*}$ is $w^*$-sequentially compact, then the {\it sequential $w^*$-Kadec-Klee property} implies the {\it $w^*$-Kadec-Klee property} on $X^*$ (see \cite[Proposition~1.4]{BorVan}). Now, we prove the desired result. 

\begin{theorem} \label{lp}  For $1<s<\infty$,  let $s'$ be the conjugate of $s$ (that is, $\frac{1}{s}+\frac{1}{s'}=1$).
\begin{itemize}	
\item[(a)]	If $2 < p , q < \infty$, then $(\ell_p, \ell_q; \mathbb{K})$ has the {\bf L}$_{p,p}$.
\item[(b)] If $1<p,q<\infty$, then $(\ell_p, \ell_q; \mathbb{K})$ has the {\bf L}$_{o,o}$ if and only if $pq>p+q$ (or, equivalently, $p>q'$).
\end{itemize}
\end{theorem}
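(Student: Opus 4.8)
The plan is to handle the two parts by different routes, both resting on the isometric identifications $\mathcal{L}(\ell_p,\ell_q;\K)\cong(\ell_p\tensor\ell_q)^*\cong\mathcal{L}(\ell_p;\ell_{q'})$ (the first from \cite[Theorem 2.9]{Rya}, the second the canonical one recalled in the Introduction), and on Pitt's theorem: in both regimes one has $q'<p$, so every bounded operator $\ell_p\to\ell_{q'}$ is compact and hence $(\ell_p\tensor\ell_q)^*=\mathcal{K}(\ell_p;\ell_{q'})$. For the point/functional correspondence note that an elementary tensor $u\tens v$ with $\|u\|_p=\|v\|_q=1$ satisfies $\|u\tens v\|_{\pi}=1$ and $A(u,v)=\langle A,u\tens v\rangle$ for every bilinear form $A$, so that norm-attainment and perturbation of $A$ at the \emph{point} $(u,v)$ translate verbatim into the corresponding statements for the \emph{functional} $A$ at the \emph{element} $u\tens v\in S_{\ell_p\tensor\ell_q}$.

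For part (a), I would recall the Franchetti--Pay\'a characterization \cite[Theorem 1.2]{FP}: $(Z;\K)$ has the {\bf L}$_{p,p}$ at a point $z\in S_Z$ precisely when the norm of $Z$ is SSD at $z$, the Franchetti--Pay\'a criterion being that for each $\e>0$ there is $\eta>0$ with $\{f\in B_{Z^*}:\re f(z)>1-\eta\}\subseteq D(z)+\e B_{Z^*}$, where $D(z)$ is the set of norming functionals at $z$. By the first paragraph it therefore suffices to prove that the norm of $Z:=\ell_p\tensor\ell_q$ is SSD (this simultaneously yields the claimed ``consequence'' that $Z$ is SSD). I would deduce SSD from a dual Kadec--Klee condition: if $Z^*$ has the sequential $w^*$-Kadec--Klee property and $B_{Z^*}$ is $w^*$-sequentially compact, then $Z$ is SSD. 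The second hypothesis is free, since $Z$ is separable and hence $B_{Z^*}$ is $w^*$-metrizable. The implication itself is a routine subsequence argument: given $z\in S_Z$ and $f_n\in B_{Z^*}$ with $\re f_n(z)\to 1$, extract $f_{n_k}\xrightarrow[]{w^*}f$; then $\re f(z)=1$ and $\|f\|=1$, so $f\in D(z)$ and $\|f_{n_k}\|\to\|f\|$, whence $w^*$-KK gives $\|f_{n_k}-f\|\to 0$ and so $\dist(f_n,D(z))\to 0$ (passing to the full sequence in the usual way). This is exactly the Franchetti--Pay\'a criterion.

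The main obstacle in (a) is verifying the sequential $w^*$-Kadec--Klee property of $Z^*=\mathcal{K}(\ell_p;\ell_{q'})$ for $2<p,q<\infty$. Pairing against the elementary tensors $e_j\tens e_i$ shows that, on bounded sequences, $T_n\xrightarrow[]{w^*}T$ means entrywise convergence of the representing matrices, so the task is: entrywise convergence together with $\|T_n\|\to\|T\|$ should force $\|T_n-T\|\to 0$. Since $\ell_p$ and $\ell_{q'}$ are reflexive with shrinking bases, every compact operator is approximated in norm by its finite compressions $Q_mTP_k$, so my plan is to upgrade entrywise (equivalently, finite-block norm) convergence to norm convergence by controlling the tails $\sup_n\|(I-Q_m)T_n\|$ and $\sup_n\|T_n(I-P_k)\|$. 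I expect this \emph{uniform} tail control, rather than any single estimate, to be the delicate step: it should be extracted from $\|T_n\|\to\|T\|$ together with the uniform convexity (hence Radon--Riesz property) of $\ell_p$ and $\ell_{q'}$, which prevents mass of the norm-attaining vectors of the $T_n$ from escaping to infinity.

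For part (b), since $\ell_q$ is uniformly convex, Lemma~\ref{uv} reduces the bilinear {\bf L}$_{o,o}$ for $(\ell_p,\ell_q;\K)$ to the operator {\bf L}$_{o,o}$ for $(\ell_p;\ell_{q'})$. If $pq>p+q$, that is $q'<p$, Pitt's theorem makes every $T\in\mathcal{L}(\ell_p;\ell_{q'})$ compact, and I would prove operator {\bf L}$_{o,o}$ by contradiction: were it to fail for some norm-one $T$ and some $\e_0>0$, there would be $x_n\in S_{\ell_p}$ with $\|Tx_n\|_{q'}\to 1$ staying $\e_0$-far from every norm-attaining point; passing to a subsequence $x_n\rightharpoonup x$, compactness gives $Tx_n\to Tx$ in norm so $\|Tx\|_{q'}=1$, forcing $\|x\|_p=1$ (as $\|Tx\|\le\|x\|\le 1$), and the Radon--Riesz property of the uniformly convex space $\ell_p$ then yields $x_n\to x$ in norm, contradicting the $\e_0$-separation. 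Lemma~\ref{uv} lifts this back to the bilinear {\bf L}$_{o,o}$. For the converse, $pq\le p+q$ (that is $q'\ge p$), I would exhibit a norm-one bilinear form that does not attain its norm, which by the observation in the text immediately defeats {\bf L}$_{o,o}$: the diagonal form $A(x,y)=\sum_n\lambda_n x_ny_n$ with $0<\lambda_n\uparrow 1$ has $\|A\|=1$ (approached along $(e_n,e_n)$), yet because $q'\ge p$ gives $\|T_Ax\|_{q'}\le\|T_Ax\|_p=\big(\sum_n\lambda_n^p|x_n|^p\big)^{1/p}<\|x\|_p$ for every $x\neq 0$, it is attained at no pair on the unit spheres.
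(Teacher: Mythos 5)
Your part (b) is correct. One direction is exactly the paper's (reduce via Lemma~\ref{uv} to the operator {\bf L}$_{o,o}$ for $(\ell_p;\ell_{q'})$), but where the paper then simply cites \cite[Theorem 2.21]{D}, you give a self-contained compactness/Radon--Riesz argument for the positive case $p>q'$ and an explicit non-attaining diagonal form $A(x,y)=\sum_n\lambda_n x_n y_n$ for the case $p\le q'$; both arguments check out (the Hölder estimate $|A(x,y)|\le\|x\|_{q'}\|y\|_q\le\|x\|_p\|y\|_q$ and the strict inequality $\|T_Ax\|_{q'}<\|x\|_p$ do give $\|A\|=1$ unattained, which defeats {\bf L}$_{o,o}$ by the reflexivity/attainment observation in Section~2). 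This is a legitimate, somewhat more elementary route for (b).

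Part (a), however, has a genuine gap. Your overall strategy coincides with the paper's: pass to $Z=\ell_p\tensor\ell_q$, use the elementary-tensor correspondence to transfer {\bf L}$_{p,p}$ from $(Z;\K)$ to $(\ell_p,\ell_q;\K)$, and obtain {\bf L}$_{p,p}$ of $(Z;\K)$ from a $w^*$-Kadec--Klee property of $Z^*=\mathcal{L}(\ell_p;\ell_{q'})$ (your subsequence derivation of the Franchetti--Pay\'a criterion from sequential $w^*$-KK plus $w^*$-sequential compactness of $B_{Z^*}$ is fine, and reproduces what the paper gets from \cite[Proposition 2.6]{DKLM} and \cite[Proposition 1.4]{BorVan}). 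But the entire weight of the proof then rests on the sequential $w^*$-Kadec--Klee property of $\mathcal{L}(\ell_p;\ell_{q'})$ for $1<q'<2<p<\infty$, and this you do not prove: you reduce it to uniform control of the tails $\sup_n\|(I-Q_m)T_n\|$ and $\sup_n\|T_n(I-P_k)\|$ and explicitly defer that estimate as ``the delicate step,'' to be ``extracted'' from $\|T_n\|\to\|T\|$ and uniform convexity. That uniform tail estimate is precisely the hard content; it is a nontrivial quantitative computation in $\mathcal{L}(\ell_p;\ell_{q'})$ and is exactly what Dilworth and Kutzarova prove (\cite[Theorem 4]{DK}, which the paper invokes, and which in fact gives the stronger sequential $w^*$-\emph{uniform}-Kadec--Klee property, valid only because $q'<2<p$). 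As written, your part (a) is a correct reduction to an unproved key lemma; either supply the tail estimate in full or cite \cite{DK} for it.
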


\begin{proof}
	
{(a)} It is known that if $X^*$ has the $w^*$-Kadec-Klee property, then the pair $(X,\mathbb{K})$ has the {\bf L}$_{p,p}$ (see \cite[Proposition~2.6]{DKLM}). On the other hand, in \cite[Theorem~4]{DK} it was proved that if $1<r<2<s<\infty$, then $\mathcal{L}(\ell_s; \ell_r)=\mathcal{L}(\ell_s, \ell_{r'}; \K) = (\ell_s \hat{\tens}_{\pi} \ell_{r'})^*$ has the sequential $w^*$-uniform-Kadec-Klee property, which implies the sequential $w^*$-Kadec-Klee property. Indeed, since $\ell_s \hat{\tens}_{\pi} \ell_{r'}$ is reflexive (see, for instance, \cite[Corollary~4.24]{Rya}), then its unit dual ball is $w^*$-sequentially compact and, consequently, $(\ell_s \hat{\tens}_{\pi} \ell_{r'})^*$ has the $w^*$-Kadec-Klee property.
Hence, the pair $(\ell_p \hat{\tens}_{\pi} \ell_{q};\mathbb{K})$ has the  {\bf L}$_{p,p}$ for $2 < p, q < \infty$.

 For a given $\e > 0$ and a fixed norm-one point $(x,y)\in S_{\ell_p}\times S_{\ell_q}$, consider $\eta(\eps,x\tens y) > 0$ to be the function in the definition of  {\bf L}$_{p,p}$ for the pair $(\ell_p \hat{\tens}_{\pi} \ell_{q};\mathbb{K})$. Let $A \in \mathcal{L}(\ell_p,\ell_q;\mathbb{K})$ with $\|A\|=1$ be such that
\begin{equation*}
|A(x,y)| > 1 - \eta(\e, x \tens y).
\end{equation*}
Consider $\hat{A}$ to be the corresponding element in $S_{(\ell_p \hat{\tens}_{\pi} \ell_{q})^*}$ via the canonical isometry. Then, we have
\begin{equation*}
|\hat{A}(x\tens y)|=|A(x,y)|>1-\eta(\eps,x\tens y).
\end{equation*}
Since the pair $(\ell_p \hat{\tens}_{\pi} \ell_{q};\mathbb{K})$ has the {\bf L}$_{p,p}$ with $\eta(\e, x \tens y) > 0$, there exists $\hat{B}\in S_{(\ell_p \hat{\tens}_{\pi} \ell_{q})^*}$ such that
\begin{equation*}
 |\hat{B}(x\tens y)|=1 \ \ \ \mbox{and} \ \ \ \|\hat{B}-\hat{A}\|<\e.
\end{equation*}
Now we take $B\in S_{\mathcal{L}(\ell_p,\ell_q; \K)}$, the corresponding element to $\hat{B}$ via the canonical isometry. Then, $|B(x, y)| = |\hat{B}(x\tens y)|= 1$ and $\|B - A\| = \|\hat{B}-\hat{A}\|<\e$. This proves (a).

\vspace{0.2cm}
\noindent
{(b)} Let $1<p,q<\infty$. By Lemma~\ref{uv}, $(\ell_p, \ell_q; \mathbb{K})$ has the {\bf L}$_{o,o}$ if and only if $(\ell_p; \ell_{q'})$ has the {\bf L}$_{o,o}$ and, in virtue of \cite[Theorem 2.21]{D}, this happens if and only if $p>q'$.
\end{proof}

Note that inside the proof of Theorem \ref{lp}, we have proved that the pair $(\ell_p \hat{\tens}_{\pi} \ell_{q};\mathbb{K})$ has the {\bf L}$_{p,p}$ for $2 < p, q < \infty$. This yields to the following consequence.
\begin{cor} For $p,q\geq 1$

(a) if $2 <p, q < \infty$, then the norm of $\ell_p \hat{\tens}_{\pi} \ell_{q}$ is SSD.	

(b)  if $p^{-1}+q^{-1}\geq 1$  or one of $p$ and $q$ is $1$ or $\infty$, then the norm of $\ell_p \hat{\tens}_{\pi} \ell_{q}$ is not SSD.
 \end{cor}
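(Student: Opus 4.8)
The plan is to deduce both statements from the Franchetti--Pay\'a characterization recalled above, namely that the norm of a Banach space $X$ is SSD if and only if the pair $(X;\K)$ has the {\bf L}$_{p,p}$ (see \cite{FP}). Part (a) is then immediate: inside the proof of Theorem~\ref{lp} it was shown that $(\ell_p \tensor \ell_q; \K)$ has the {\bf L}$_{p,p}$ for $2<p,q<\infty$, and hence the norm of $\ell_p\tensor\ell_q$ is SSD in that range. All the work therefore goes into (b), where I must exhibit, under each of the stated conditions on $p,q$, an obstruction to SSD.

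For (b) the main tool is that SSD passes to closed subspaces: if $Z \subseteq X$ is a closed subspace and $z \in S_Z \subseteq S_X$, then the one-sided limit in \eqref{def SSD} converges uniformly for $h \in B_X$, hence a fortiori for $h \in B_Z$, so $Z$ is SSD whenever $X$ is. Consequently it suffices to locate inside $\ell_p\tensor\ell_q$ an isometric copy of a space that is \emph{not} SSD; the natural candidate is $\ell_1$, which is known not to be SSD. I would split into two regimes. First, when one of the indices lies in $\{1,\infty\}$, the elementary-tensor maps $x \mapsto x \tens y_0$ and $y \mapsto x_0 \tens y$ (with $x_0,y_0$ norm-one) are isometric embeddings of $\ell_p$ and $\ell_q$ into $\ell_p\tensor\ell_q$, since $\|x\tens y_0\|_{\pi} = \|x\|_p\|y_0\|_q$; as both $\ell_1$ and $\ell_\infty$ fail to be SSD (the former classically, the latter because it contains an isometric copy of $\ell_1$ by universality), the tensor product is not SSD.

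The remaining and main case is $1<p,q<\infty$ with $p^{-1}+q^{-1}\geq 1$, where $\ell_p$ and $\ell_q$ are themselves SSD and a subtler subspace is needed. Here I would use the \emph{diagonal} subspace $Z = \overline{\spann}\{e_n\tens e_n\}$ and show it is isometric to $\ell_1$. The upper bound $\|\sum_n \lambda_n\, e_n\tens e_n\|_{\pi} \leq \sum_n|\lambda_n|$ is the triangle inequality together with $\|e_n\tens e_n\|_\pi=1$. For the lower bound I would test against the diagonal bilinear form $B(x,y)=\sum_n c_n x_n y_n$ with $c_n = \overline{\sgn(\lambda_n)}$, using the duality $\|u\|_\pi = \sup\{|\langle B,u\rangle| : \|B\|\leq 1\}$ and the canonical identification of $B$ with the diagonal operator $\operatorname{diag}(c_n)\colon \ell_p \to \ell_{q'}$. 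The crucial arithmetic point is that a diagonal operator $\operatorname{diag}(c_n)\colon \ell_p\to\ell_r$ has norm $\sup_n|c_n|$ whenever $r\geq p$; taking $r=q'$, the condition $q'\geq p$ is exactly $p^{-1}+q^{-1}\geq 1$, which is our hypothesis. Thus $\|B\|=1$ while $\langle B,u\rangle=\sum_n|\lambda_n|$, giving the matching lower bound and the isometry $Z\cong\ell_1$, so $\ell_p\tensor\ell_q$ is not SSD.

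I expect the diagonal norm computation to be the only real obstacle, since it is where the threshold $p^{-1}+q^{-1}\geq 1$ enters and where one must be careful that the extremal bilinear form has norm exactly one (equivalently, that the diagonal operator into $\ell_{q'}$ attains its norm bound $\sup_n|c_n|$). The endpoint cases $p$ or $q\in\{1,\infty\}$ reduce, as explained, to the non-SSD-ness of $\ell_1$ and $\ell_\infty$, and are routine once the subspace-heredity of SSD has been recorded.
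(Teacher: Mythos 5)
Your proposal is correct, and part (a) coincides with the paper's argument, but your treatment of part (b) takes a genuinely different route. The paper never invokes subspace-heredity of SSD directly: for $p^{-1}+q^{-1}\geq 1$ it cites Arias--Farmer for the fact that the diagonal $\overline{\spann}\{e_n\tens e_n\}$ is \emph{one-complemented} and isometric to $\ell_1$, and then runs the obstruction through the {\bf L}$_{p,p}$ machinery (SSD $\Leftrightarrow$ {\bf L}$_{p,p}$ via Franchetti--Pay\'a, plus the stability of {\bf L}$_{p,p}$ under one-complemented subspaces from \cite{DKLM}); for the endpoint cases it argues that SSD of $\ell_p\tensor\ell_q$ would give the bilinear {\bf L}$_{p,p}$ for $(\ell_p,\ell_q;\K)$ and then applies Proposition~\ref{bil3}(c). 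You instead observe that SSD passes to \emph{arbitrary} closed subspaces straight from the definition (uniformity over $B_X$ restricts to uniformity over $B_Z$), which lets you drop both the one-complementedness and the bilinear transfer: it is enough to locate an isometric copy of $\ell_1$, via elementary tensors $x\mapsto x\tens y_0$ in the endpoint cases and via the diagonal in the range $p^{-1}+q^{-1}\geq 1$. Your hand computation of the diagonal norm is sound: the upper bound is the triangle inequality, the lower bound tests against the diagonal bilinear form with unimodular coefficients, and the identity $\|\operatorname{diag}(c_n)\colon\ell_p\to\ell_{q'}\|=\sup_n|c_n|$ holds precisely because $q'\geq p$, which is where the threshold enters. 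The net effect is that your argument is more self-contained and elementary (it reproves the isometric part of the Arias--Farmer result it needs and avoids the {\bf L}$_{p,p}$ detour), while the paper's version is shorter on the page because it leans on the cited structural result and on propositions already established for the local properties.
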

\begin{proof}
As we already mentioned, item (a) follows from the proof of Theorem~\ref{lp} and  \cite[Theorem 2.3]{DKLM}. To prove (b), note that if $p^{-1}+q^{-1}\geq 1$, then the main diagonal $\mathcal{D}=\overline{\mbox{span} \{e_n\otimes e_n:\,\, n\in \mathbb{N}\}}$ is one-complemented in $\ell_p \hat{\tens}_{\pi} \ell_{q}$ and isometrically isomorphic to $\ell_1$ (see, for instance, \cite[Theorem~1.3]{AF}). Hence, if the norm of $\ell_p \hat{\tens}_{\pi} \ell_{q}$ were SSD, by \cite[Theorem 2.3]{DKLM} we would have that $(\ell_p \hat{\tens}_{\pi} \ell_{q},\K)$ has the {\bf L}$_{p,p}$ and,
by \cite[Proposition~4.4~(b)]{DKLM}, $(\ell_1, \K)$ would have the {\bf L}$_{p,p}$, which gives the desired contradiction. Suppose now that $p$ or $q$ take the value 1 or $\infty$. As we showed in the proof of  Theorem~\ref{lp}, if $\ell_p \hat{\tens}_{\pi} \ell_{q}$ were SSD then $(\ell_p, \ell_q; \mathbb{K})$ would have the {\bf L}$_{p,p}$ for bilinear forms, which is not possible by Proposition~\ref{bil3}.(c) since neither $\ell_1$ nor $\ell_\infty$ are not SSD.
\end{proof}
In the proof of Theorem~\ref{lp} we showed that if the pair $(\ell_p \hat{\tens}_{\pi} \ell_{q};\mathbb{K})$ has the {\bf L}$_{p,p}$ (or, equivalently, $\ell_p \hat{\tens}_{\pi} \ell_{q}$ is SSD) then $(\ell_p, \ell_q; \mathbb{K})$ has the {\bf L}$_{p,p}$ for bilinear forms. However, it is worth to remark that the converse is not true. For instance, $(\ell_2, \ell_2; \mathbb{K})$ has the {\bf L}$_{p,p}$ for bilinear forms (moreover, it has the {\bf BPBpp} by \cite[Corollary~3.2]{DKL}) but $\ell_2 \hat{\tens}_{\pi} \ell_{2}$ is not SSD.

We finish this section with some remarks and open questions.
\begin{rem}\label{remark lp}
{\bf (a)} Since the uniform properties imply the local properties, when trying to prove that $(X_1,\dots, X_N;\K)$ has the {\bf L}$_{p,p}$ (respectively, {\bf L}$_{o,o}$) for some Banach spaces $X_1,\dots, X_N$, it is natural to ask first if $(X_1,\dots, X_N;\K)$ has (or not) the {\bf BPBpp} (respectively,  {\bf BPBop}). Taking into account Theorem~\ref{lp} we must say that, to the best of our knowledge, it is not known whether $(\ell_p, \ell_q; \mathbb{K})$ has the {\bf BPBpp} when $2 < p , q < \infty$. On the other hand, by Corollary~\ref{failBPBop}, $(\ell_p, \ell_q; \mathbb{K})$ fails the {\bf BPBop} for every $1<p, q<\infty$.

{\bf (b)} By Proposition~\ref{bil3} we know that if $(X_1,\dots, X_N;\K)$ has the {\bf L}$_{p,p}$ (respectively, {\bf L}$_{o,o}$), then so does $(X_i;\mathbb{K})$ for $1\leq i\leq N$. Hence, we may ask if $(X_1,\dots, X_N;\K)$ has one of the mentioned properties whenever the pairs $(X_i;\mathbb{K})$ does. In that sense, note that $(\ell_p; \K)$ and $(\ell_q;\K)$ both have the {\bf L}$_{o,o}$ for every $1<p, q<\infty$ (since $\ell_p$ and $\ell_q$ are both reflexive and $\ell_p^*, \ell_q^*$ are both SSD) but, in virtue of Theorem~\ref{lp}~(b), there are $p, q$ such that $(\ell_p, \ell_q; \mathbb{K})$ fails the {\bf L}$_{o,o}$. We also have that the pairs $(\ell_p;\K)$ and $(\ell_q; \K)$ have the {\bf L}$_{p,p}$ for every $1<p, q<\infty$ but we do not know if there is some $1<p,q<\infty$ such that $(\ell_p, \ell_q; \mathbb{K})$ fails the {\bf L}$_{p,p}$ for bilinear forms.

\end{rem}

\section{Local AHSP}

Our main aim in this section is to give a characterization for the Banach space $Y$ in such a way that $(\ell_1^N, Y; \K)$ satisfies the {\bf L}$_{p,p}$. Indeed, we prove that the norm of a Banach space $Y$ is SSD if and only if $(\ell_1^N, Y; \K)$ has the {\bf L}$_{p, p}$ for bilinear forms. To do so, we get a characterization of SSD that is motivated by the {\it approximate hyperplane series property} (AHSP, for short), which was defined for the first time in \cite{AAGM}. Before giving our characterization, we recall the definition and important results concerning this property.

\begin{definition}[\cite{AAGM}]\label{ahsp} A Banach space $Y$ has the AHSP if for every $\e > 0$, there is $\eta(\e) > 0$ such that given a sequence $(y_k) \subset S_Y$ and a convex series $\sum_{k=1}^{\infty} \alpha_k$ such that
\begin{equation*}
\left\| \sum_{k=1}^{\infty} \alpha_k y_k \right\| > 1 - \eta(\e),
\end{equation*}
there exist $A \subset \N$, $y^* \in S_{Y^*}$, and $\{z_j: j \in A\}$ satisfying the following conditions:
\begin{equation*}
\sum_{k \in A} \alpha_k > 1 - \e, \ \ \ \|z_k - y_k\| < \e, \ \ \ \mbox{and} \ \ \ y^*(z_k) = 1 \text{~for every~} k \in A.
\end{equation*}
\end{definition}

Finite dimensional, uniformly convex and lush spaces are known examples of Banach spaces satisfying the AHSP (see \cite[Propositions 3.5, 3.8]{AAGM} and \cite[Theorem 7]{CK}, respectively). More specifically, $L_p(\mu)$-spaces for arbitrary $1 \leq p \leq \infty$ and $C(K)$-spaces for a compact Hausdorff $K$ are concrete examples of such a Banach spaces. This property was defined in \cite{AAGM} in order to give a characterization for the Banach spaces $Y$ such that the pair $(\ell_1; Y)$ has the {\bf BPBp} for operators. Here, we are interested to get a local version of AHSP which is related with the {\bf L}$_{p,p}$ for bilinear mappings (see \cite[Definition~3.1]{ABGM} and \cite[Section 3]{DGKLM} for AHSP for bilinear mappings). It turns out that this local version of AHSP is equivalent to SSD of the norm.


\begin{prop} \label{SSDe} Let $Y$ be a Banach space. For any $N \in \N$, the following are equivalent.
\begin{itemize}
\item[(a)] The norm of $Y$ is SSD.
\vspace{0.1cm}
\item[(b)] Given $\e > 0$, a nonempty set $A \subset \{1, \ldots, N \}$, $(\alpha_j)_{j \in A}$ with $\alpha_j > 0$ for all $j \in A$ and $\sum_{j \in A} \alpha_j = 1$, and $y \in S_Y$,  there is $\eta = \eta(\e, (\alpha_j)_{j \in A}, y) > 0$ such that  whenever $(y_j^*)_{j \in A} \subset S_{Y^*}$ satisfies
	\begin{equation*}
	\re \sum_{j \in A} \alpha_j y_j^*(y) > 1 - \eta,	
	\end{equation*}
there is $(z_j^*)_{j \in A} \subset S_{Y^*}$ such that
\begin{equation*}
z_j^*(y) = 1 \ \ \ \mbox{and} \ \ \ \|z_j^* - y_j^*\| < \e,	
\end{equation*}	
for all $j \in A$.
\end{itemize}
\end{prop}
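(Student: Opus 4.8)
The plan is to show that statement (b), specialized to a single index, is precisely the Franchetti--Pay\'a characterization of SSD that is used throughout the paper, and that the general multi-index case then follows from the single-index one by a simple averaging argument. First I would observe that if $A=\{j_0\}$ is a singleton then necessarily $\alpha_{j_0}=1$, and (b) reads: for every $\e>0$ and $y\in S_Y$ there is $\eta>0$ such that any $y^*\in S_{Y^*}$ with $\re y^*(y)>1-\eta$ admits $z^*\in S_{Y^*}$ with $z^*(y)=1$ and $\|z^*-y^*\|<\e$. This is exactly the statement that the norm of $Y$ is SSD at $y$ (equivalently, that $(Y;\K)$ has the {\bf L}$_{p,p}$), by the characterization of Franchetti and Pay\'a recalled before Corollary~\ref{coro1}. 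Hence (b)$\Rightarrow$(a) is immediate by taking $A$ to be a singleton, and it only remains to establish (a)$\Rightarrow$(b).

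For (a)$\Rightarrow$(b), I would fix $\e>0$, a nonempty $A\subset\{1,\dots,N\}$, a convex weight $(\alpha_j)_{j\in A}$, and $y\in S_Y$. Let $\eta_0=\eta_0(\e,y)>0$ be the modulus provided by SSD at $y$ in the single-index form above, set $\alpha_{\min}:=\min_{j\in A}\alpha_j>0$, and put $\eta:=\alpha_{\min}\,\eta_0$. Suppose $(y_j^*)_{j\in A}\subset S_{Y^*}$ satisfies $\re\sum_{j\in A}\alpha_j y_j^*(y)>1-\eta$. Since $\|y_j^*\|=\|y\|=1$ we have $\re y_j^*(y)\le 1$, so every term of
\[
\sum_{j\in A}\alpha_j\bigl(1-\re y_j^*(y)\bigr)<\eta
\]
is nonnegative; in particular $\alpha_j\bigl(1-\re y_j^*(y)\bigr)<\eta$ for each $j$, whence $\re y_j^*(y)>1-\eta/\alpha_j\geq 1-\eta/\alpha_{\min}=1-\eta_0$. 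Applying SSD at $y$ to each $y_j^*$ then produces $z_j^*\in S_{Y^*}$ with $z_j^*(y)=1$ and $\|z_j^*-y_j^*\|<\e$, which is precisely the conclusion of (b).

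The argument is short precisely because the role of the convex weights is only to allow $\eta$ to depend on $(\alpha_j)_{j\in A}$: the averaging step above is what converts ``the weighted average is near-norming'' into ``each individual functional is near-norming'', and the dependence of $\eta$ on $\alpha_{\min}$ is exactly what makes this possible. I expect the only point requiring genuine care to be the bookkeeping in the complex scalar case: if one prefers to invoke SSD in the modulus form ($|y^*(y)|>1-\eta_0$ yielding $w^*$ with $|w^*(y)|=1$), then one must additionally multiply $w^*$ by a unimodular scalar to arrange $z_j^*(y)=1$, and check, using $\re w_j^*(y)\ge \re y_j^*(y)-\|w_j^*-y_j^*\|$, that this rotation displaces $w_j^*$ by an amount controlled by $\eta_0$ and $\e$. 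Working directly with the $\re$-form of the Franchetti--Pay\'a characterization, as above, avoids this entirely, so I anticipate no real obstacle beyond choosing that formulation.
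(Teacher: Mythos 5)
Your proof is correct and follows essentially the same route as the paper's: both directions reduce to the single-functional statement, and the key step in (a)$\Rightarrow$(b) is the same convexity/averaging argument with $\eta=\alpha_{\min}\,\eta_0$, where $\eta_0$ is the modulus for a single functional. The only difference is cosmetic: the paper works from the modulus form of the {\bf L}$_{p,p}$ for $(Y;\K)$, so it must rotate the functionals $\tilde z_k^*$ by unimodular scalars (which is where its $\e^2/16$ bookkeeping comes from), whereas you invoke the $\re$-form of the Franchetti--Pay\'a slice characterization directly --- exactly the point you anticipated in your closing remark.
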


\begin{proof} (b) implies (a) by considering a singleton $A$ and recalling that $Y$ is SSD if and only if $(Y,\K)$ has the {\bf L}$_{p,p}$. Now assume that the norm of $Y$ is SSD or, equivalently, that the pair $(Y; \K)$ has the {\bf L}$_{p,p}$ with some function $\eta'$.  Fix $\e > 0$, a nonempty set $A \subset \{1, \ldots, N \}$, $(\alpha_j)_{j \in A}$ with $\alpha_j > 0$ for all $j \in A$ and $\sum_{j \in A} \alpha_j = 1$, and $y \in S_Y$. Set $\alpha := \min_{j\in A}\alpha_j$ and
\begin{equation*}
\eta(\e, (\alpha_j)_{j \in A}, y) := \alpha \eta'\left( \frac{\e^2}{16},y \right) > 0.
\end{equation*}
Note that we may assume that $\eta'(\e, y) \leq \e$ for every $\e > 0$. Let $(y_j^*)_{j \in A} \subset S_{Y^*}$ be such that
	\begin{equation*}
		\re \sum_{j \in A} \alpha_j y_j^*(y) > 1 - \eta(\e, (\alpha_j)_{j \in A}, y).	
	\end{equation*}
Then, for each $k \in A$, we have
\begin{equation*}
1 - \alpha_k \eta'(\e, y) \leq 1 - \alpha \eta'(\e, y)
= 1 - \eta(\e, (\alpha_j)_{j \in A}, y) <\re \sum_{j \in A} \alpha_j y_j^*(y)
\leq \re \alpha_k y_k^*(y) + (1 - \alpha_k).
\end{equation*}
So,
\begin{equation*}
\re y_k^*(y) > 1 - \eta'\left( \frac{\e^2}{16},y \right) \ \  \mbox{for each $k \in A$}.
\end{equation*}
Since $(Y; \K)$ has the {\bf L}$_{p,p}$ with $\eta'$, for each $k \in A$, there is $\tilde{z}_k^* \in S_{Y^*}$ such that
\begin{equation*}
|\tilde{z}_k^*(y)| = 1 \ \ \ \mbox{and} \ \ \  \|\tilde{z}_k^* - y_k^*\| < \frac{\e^2}{16}.
\end{equation*}
For each $k \in A$, write $\tilde{z}_k^*(y) = e^{i \theta_k} |\tilde{z}_k^*(y)| =  e^{i \theta_k}$. Then,
\begin{equation*}
\|e^{-i\theta_k} \tilde{z}_k^* - y_k^*\| \leq |1 - e^{i \theta_k}| + \|\tilde{z}_k^* - y_k^*\|
\end{equation*}
for all $k \in A$. Now, note that whenever $k \in A$, we have
\begin{equation*}
\re e^{i \theta_k} = \re \tilde{z}_k^*(y) \geq \re y_k^*(y) - \|\tilde{z}_k^* - y_k^*\| > 1 - \eta'\left( \frac{\e^2}{16},y \right) - \frac{\e^2}{16} > 1 - \frac{\e^2}{8}.	
\end{equation*}
So,
\begin{eqnarray*}
|1 - e^{i \theta_k}|^2 &=& (\re e^{i \theta_k} - 1)^2 + (\im e^{i \theta_k} )^2 \\
&=& 1 - 2 \re e^{i \theta_k} +(\re e^{i \theta_k})^2 + (\im e^{i \theta_k} )^2 \\
&=& 2 (1 - \re e^{i \theta_k}) < \frac{\e^2}{4},
\end{eqnarray*}
which implies $|1 - e^{i \theta_k}| < \frac{\e}{2}$ for every $k \in A$. Then, for each $k \in A$, we have
\begin{equation*}
\|e^{-i\theta_k}\tilde{z}_k^* - y_k^*\| < \frac{\e}{2} + \frac{\e^2}{16} < \e.
\end{equation*}
Setting $z_k^* := e^{-i \theta_k} \tilde{z}_k^*$ for each $k \in A$, we have that $z_k^*(y) = 1$ and $\|z_k^* - y_k^*\| < \e$, which proves that (a) implies (b).
\end{proof}

Note that part (b) of Proposition \ref{SSDe} is a kind of local version of AHSP for the Bishop-Phelps-Bollob\'as point property since we do not move the initial point and also the $\eta$ in its definition depends not only on a positive $\e$ but also on a finite convex series and on a norm-one point. Observe that, by a simple change of parameters, we can take $(y_j^*)_{j \in A}$ in $B_{Y^*}$ instead of $S_{Y^*}$ in item (b) and we are using this fact without any explicit reference in the next theorem, where we prove the promised characterization of property {\bf L}$_{p,p}$ for $(\ell_1^N, Y; \K)$.

\begin{theorem} \label{AHSPbil1} Let $Y$ be a Banach space and $N \in \N$. Then,  $(\ell_1^N, Y;\K)$ has the {\bf L}$_{p, p}$ if and only if the norm of $Y$ is SSD.	
\end{theorem}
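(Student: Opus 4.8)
The plan is to prove the two implications separately, with essentially all of the work located in the sufficiency direction, where Proposition~\ref{SSDe} supplies the needed mechanism. For the necessity, suppose $(\ell_1^N, Y;\K)$ has the {\bf L}$_{p,p}$. Viewing this as a bilinear pair (two factors $\ell_1^N$ and $Y$), Proposition~\ref{bil3}.(c) immediately yields that $(Y;\K)$ has the {\bf L}$_{p,p}$, and by the Franchetti--Pay\'a characterization recalled above this is equivalent to the norm of $Y$ being SSD. So the only substantial task is to show that SSD of $Y$ forces $(\ell_1^N, Y;\K)$ to have the {\bf L}$_{p,p}$.

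For the sufficiency I would first set up the standard dictionary. Via the canonical isometry $\mathcal{L}(\ell_1^N, Y;\K)\cong \mathcal{L}(\ell_1^N; Y^*)$, and using that the extreme points of $B_{\ell_1^N}$ are the unimodular multiples of the basis vectors $e_j$, a norm-one bilinear form $A$ is encoded by the tuple $(y_j^*)_{j=1}^N\subset Y^*$ with $y_j^*:=A(e_j,\cdot)$, and one has $\|A\|=\max_j \|y_j^*\|$. An initial datum for the {\bf L}$_{p,p}$ is a point $(x,y)\in S_{\ell_1^N}\times S_Y$; writing $x=\sum_j x_j e_j$ with $\sum_j |x_j|=1$, I set $\alpha_j:=|x_j|$ and let $A_0:=\{j:\alpha_j>0\}$ be its (nonempty) support, so that $(\alpha_j)_{j\in A_0}$ is a finite convex combination indexed by $A_0\subset\{1,\dots,N\}$. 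The key point is that the bilinear $\eta$ may depend on the full point $(x,y)$, so I will simply declare $\eta(\e,(x,y)):=\eta'(\e,(\alpha_j)_{j\in A_0},y)$, where $\eta'$ is the function provided by Proposition~\ref{SSDe}.(b); this is legitimate since $(x,y)$ determines $A_0$, the weights $(\alpha_j)_{j\in A_0}$, and the point $y$.

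The core of the argument is a phase bookkeeping step followed by a single application of Proposition~\ref{SSDe}.(b). Given $A$ with $\|A\|=1$ and $|A(x,y)|>1-\eta(\e,(x,y))$, write $x_j=\alpha_j e^{i\phi_j}$ and pick $\psi$ with $e^{i\psi}A(x,y)=|A(x,y)|$. Absorbing these phases into the functionals, I pass to $v_j^*:=e^{i(\psi+\phi_j)}y_j^*$, which satisfy $\|v_j^*\|\le 1$ and $\re\sum_{j\in A_0}\alpha_j v_j^*(y)=|A(x,y)|>1-\eta$. Proposition~\ref{SSDe}.(b), in its $B_{Y^*}$ formulation noted after its proof, then produces $(w_j^*)_{j\in A_0}\subset S_{Y^*}$ with $w_j^*(y)=1$ and $\|w_j^*-v_j^*\|<\e$ for each $j\in A_0$. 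I reassemble the target bilinear form $B$ by setting $B(e_j,\cdot):=e^{-i(\psi+\phi_j)}w_j^*$ for $j\in A_0$ and $B(e_j,\cdot):=A(e_j,\cdot)$ for $j\notin A_0$. Then $\|B\|=\max_j\|B(e_j,\cdot)\|=1$ (the functionals on the nonempty support are unit vectors, the rest unchanged), undoing the phases gives $B(x,y)=e^{-i\psi}\sum_{j\in A_0}\alpha_j w_j^*(y)=e^{-i\psi}$ so that $|B(x,y)|=1$, and since $B$ agrees with $A$ off $A_0$ while $(B-A)(e_j,\cdot)=e^{-i(\psi+\phi_j)}(w_j^*-v_j^*)$ on $A_0$, we get $\|B-A\|=\max_{j\in A_0}\|w_j^*-v_j^*\|<\e$.

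I expect the only genuinely delicate point to be the phase bookkeeping: tracking the coordinate phases $e^{i\phi_j}$ together with the global phase $e^{i\psi}$, and verifying that after reinsertion the reassembled $B$ simultaneously attains its norm exactly at $(x,y)$ and remains $\e$-close to $A$ in operator norm. The remaining ingredients---the norm formula $\|A\|=\max_j\|A(e_j,\cdot)\|$ on $\ell_1^N\times Y$, the admissibility of the chosen $\eta$-dependence, and the clean reduction to Proposition~\ref{SSDe}---are routine.
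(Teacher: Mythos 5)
Your proposal is correct and follows essentially the same route as the paper: necessity via Proposition~\ref{bil3}.(c) together with the Franchetti--Pay\'a characterization, and sufficiency by reading off the functionals $A(e_j,\cdot)$, applying Proposition~\ref{SSDe}.(b) (in its $B_{Y^*}$ form) with weights $|x_j|$ on the support of $x$, and reassembling $B$ coordinatewise. The only difference is cosmetic: you carry the coordinate and global phases explicitly, where the paper absorbs them by composing with an isometry of $\ell_1^N$ and rotating $A$.
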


\begin{proof}
If $(\ell_1^N, Y; \K)$ has the {\bf L}$_{p, p}$ for bilinear forms then, by Proposition~\ref{bil3} (c), the pair $(Y; \mathbb{K})$ has the {\bf L}$_{p, p}$, which is equivalent to say that the norm of $Y$ is SSD. Suppose now that the norm of $Y$ is SSD. Let $\e > 0$ and $(x, y) \in S_{\ell_1^N} \times S_Y$ be given. We write $x = (x_1, \ldots, x_N)$ and assume that $x_j \geq 0$ for all $j = 1, \ldots, N$ by composing it with an isometry if necessary. Let $A = \{j \in \{1, \ldots, N\}: x_j \not= 0 \}$. Then $\|x\|_1 = \sum_{j \in A} x_j = 1$. Consider $(x_j)_{j \in A}$ and by Proposition \ref{SSDe}, we may set
	\begin{equation*}
	\eta(\e, (x, y)) := \eta(\e, (x_j)_{j \in A}, y) > 0.
	\end{equation*}
Let $A \in \mathcal{L}(\ell_1^N, Y; \K)$ with $\|A\| = 1$ be such that
\begin{equation*}
|A(x, y)| > 1 - \eta(\e, (x, y)).	
\end{equation*}	
By rotating $A$, if necessary, we may assume that $\re A (x, y) > 1 - \eta(\e, (x, y))$.	So,
\begin{equation*}
\re A(x, y) = \re \sum_{j \in A} x_j A(e_j, y) > 1 - \eta(\e, (x, y)).	
\end{equation*}
Define $y_j^* (y) := A(e_j, y)$ for every $y \in Y$. Since $\|A\| = 1$, we have that $(y_j^*)_{j \in A} \subset B_{Y^*}$ and
\begin{equation*}
\re \sum_{j \in A} x_j y_j^*(y) > 1 - \eta(\e, (x, y)) = 1 - \eta(\e, (x_j)_{j \in A}, y).	
\end{equation*}
By Proposition \ref{SSDe}, there is $(z_j^*)_{j \in A} \subset S_{Y^*}$ such that $z_j^*(y) = 1$ and $\|z_j^* - y_j^*\| < \e$, for all $j \in A$. Now, define $B: \ell_1^N \times Y \longrightarrow \K$ by
\begin{equation*}
B(u, v) := \sum_{j \in A} u_j z_j^*(v) + \sum_{j \not\in A} u_j A(e_j, v),	
\end{equation*}
for $u = (u_j)_{j=1}^N \in \ell_1^N$ and $v \in Y$. So, $\|B\| \leq 1$ and
\begin{equation*}
|B(x, y)| = \left| \sum_{j \in A} x_j B(e_j, y) \right| = \left| \sum_{j \in A} x_j z_j^*(y) \right| = \sum_{j \in A} x_j = 1.
\end{equation*}
Then $\|B\| = |B(x, y)| = 1$. Also, for every $(u, v) \in S_{\ell_1^N} \times S_Y$, we have
\begin{equation*}
|B(u, v) - A(u, v)| = \left|\sum_{j \in A} u_j (z_j^* - y_j^*)(v) \right| < \e \sum_{j \in A} u_j \|v\| \leq \e.	
\end{equation*}
Therefore $\|B - A\| < \e$, and this shows that $(\ell_1^N, Y; \K)$ has the {\bf L}$_{p, p}$ for bilinear forms.
\end{proof}

\begin{rem}\label{counter}
We can use Theorem \ref{AHSPbil1} to show that Proposition \ref{bil3}.(b) does not hold for {\bf L}$_{p, p}$. Consider a dual space $Y^*$ which is isomorphic to $\ell_1$ and its norm is locally uniformly rotund (and then strictly convex) \cite{Kadec}. Then, the norm of $Y$ is Fr\'echet differentiable (see, for example, \cite[Fact 8.18]{FHHMPZ}), and so it is SSD. By Theorem \ref{AHSPbil1}, we have that  $(\ell_1^N,Y;\mathbb{K})$ has the  {\bf L}$_{p, p}$. Suppose by contradiction that the pair $(\ell_1^N; Y^*)$ has the {\bf L}$_{p, p}$ for operators. Since {\bf L}$_{p, p}$ is stable under one-complemented subspaces on the domain (see \cite[Proposition 4.4]{DKLM}), the pair $(\ell_1^2; Y^*)$ also has the {\bf L}$_{p, p}$. Since $Y^*$ is strictly convex, by using \cite[Proposition 3.2]{DKLM} we get that $Y^*$ should be uniformly convex, which is not possible. So, $Y$ is the desired counterexample.
\end{rem}
Analogously to the bilinear case, we obtain a characterization of those Banach spaces $Y$ such that the pair $(\ell_1^N; Y)$ has the {\bf L}$_{p, p}$ for operators for an arbitrary $N \in \N$. Since the proof is quite similar to Theorem \ref{AHSPbil1}, we omit the details.
\begin{prop} \label{AHSP1} Let $Y$ be a Banach space. The pair $(\ell_1^N; Y)$ has the {\bf L}$_{p, p}$ for operators if and only if given $\e>0$, a nonempty set $A \subset \{1, \ldots, N \}$ and $(\alpha_j)_{j \in A}$ with $\alpha_j > 0$ for all $j \in A$ such that $\sum_{j \in A} \alpha_j = 1$,  there is $\eta = \eta(\e, (\alpha_j)_{j \in A}) > 0$ such that whenever $(y_j)_{j \in A} \subset S_Y$ satisfies
	\begin{equation*}
		\left\| \sum_{j \in A} \alpha_j y_j \right\| > 1 - \eta,	
	\end{equation*}
	there are $z^* \in S_{Y^*}$ and $(z_j)_{j \in A} \subset S_Y$ such that
	\begin{equation*}
		z^*(z_j) = 1 \ \ \ \mbox{and} \ \ \ \|z_j - y_j\| < \e,	
	\end{equation*}	
	for all $j \in A$.		
\end{prop}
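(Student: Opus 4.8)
The plan is to mirror the proof of Theorem~\ref{AHSPbil1}, replacing the bilinear forms by operators and the functional-valued condition of Proposition~\ref{SSDe}(b) by the vector-valued condition in the statement. The organizing observation throughout is the identification of an operator $T\in\mathcal{L}(\ell_1^N;Y)$ with the $N$-tuple $(T(e_1),\dots,T(e_N))\in Y^N$, under which $\|T\|=\max_{1\le j\le N}\|T(e_j)\|$ and $\|S-T\|=\max_{1\le j\le N}\|S(e_j)-T(e_j)\|$.

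For the necessity, I would assume $(\ell_1^N;Y)$ has the {\bf L}$_{p,p}$. Given $\e>0$, a nonempty $A\subset\{1,\dots,N\}$, and weights $(\alpha_j)_{j\in A}$, let $x\in S_{\ell_1^N}$ be the point with coordinates $\alpha_j$ on $A$ and $0$ elsewhere, and set $\eta(\e,(\alpha_j)_{j\in A}):=\eta(\e,x)$, the function provided by the {\bf L}$_{p,p}$. For $(y_j)_{j\in A}\subset S_Y$ with $\|\sum_{j\in A}\alpha_j y_j\|>1-\eta$, define $T$ by $T(e_j)=y_j$ for $j\in A$ and $T(e_j)=0$ otherwise; then $\|T\|=1$ and $\|T(x)\|>1-\eta(\e,x)$. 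Applying the {\bf L}$_{p,p}$ yields $S$ with $\|S\|=\|S(x)\|=1$ and $\|S-T\|<\e$. Setting $z_j:=S(e_j)$ and choosing $z^*\in S_{Y^*}$ that norms $S(x)=\sum_{j\in A}\alpha_j z_j$, the convexity argument (each $\re z^*(z_j)\le\|z_j\|\le1$, while the $\alpha$-weighted average equals $1$) forces $\re z^*(z_j)=1$, hence $z^*(z_j)=1$ and $\|z_j\|=1$, for every $j\in A$; moreover $\|z_j-y_j\|=\|(S-T)(e_j)\|<\e$. These are exactly the required conclusions.

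For the sufficiency, given $\e>0$ and $x=(x_1,\dots,x_N)\in S_{\ell_1^N}$, I would (composing with an isometry) assume $x_j\ge0$, put $A=\{j:x_j\ne0\}$ so that $\sum_{j\in A}x_j=1$, and take $\eta(\e,x):=\eta(\e,(x_j)_{j\in A})$ from the hypothesis. For a norm-one $T$ with $\|T(x)\|>1-\eta(\e,x)$, the vectors $y_j:=T(e_j)$ satisfy $\|\sum_{j\in A}x_j y_j\|=\|T(x)\|>1-\eta$, but a priori only lie in $B_Y$; as in the remark preceding Theorem~\ref{AHSPbil1}, a change of parameters lets me apply the $B_Y$-version of the hypothesis. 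This produces $z^*\in S_{Y^*}$ and $(z_j)_{j\in A}\subset S_Y$ with $z^*(z_j)=1$ and $\|z_j-y_j\|<\e$. Defining $S$ by $S(e_j)=z_j$ for $j\in A$ and $S(e_j)=T(e_j)$ for $j\notin A$, one checks $\|S\|=1$, that $z^*$ certifies $\|S(x)\|=\|\sum_{j\in A}x_j z_j\|=1$, and $\|S-T\|=\max_{j\in A}\|z_j-y_j\|<\e$; hence $(\ell_1^N;Y)$ has the {\bf L}$_{p,p}$.

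I expect the only genuinely delicate point to be the $S_Y$-versus-$B_Y$ passage in the sufficiency direction: the hypothesis is phrased for unit vectors, whereas $T(e_j)$ need only belong to $B_Y$. Here one quantifies how the near-maximality of $\|\sum_{j\in A}x_j y_j\|$ forces each $\|y_j\|$ to be close to $1$ (the loss being controlled by $\min_{j\in A}x_j$), so that normalizing to $\tilde y_j:=y_j/\|y_j\|$ moves each vector by less than a prescribed amount; one then absorbs this normalization error into $\e$ by choosing $\eta$ small enough relative to $\min_{j\in A}x_j$. Everything else — the norm formula for operators on $\ell_1^N$, the extraction of the common norming functional in the necessity, and the verification that the modified operator $S$ keeps norm one and still attains at $x$ — is routine and parallels Theorem~\ref{AHSPbil1}.
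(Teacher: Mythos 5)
Your proposal is correct and is essentially the proof the paper has in mind: the paper omits the argument, stating only that it is ``quite similar to Theorem~\ref{AHSPbil1}'', and your two directions (the convexity argument extracting the common norming functional for necessity, and the construction of $S$ via $S(e_j)=z_j$ on $A$ for sufficiency) are exactly that adaptation. Your handling of the $S_Y$-versus-$B_Y$ passage by normalizing $T(e_j)$ and absorbing the error controlled by $\min_{j\in A}x_j$ into $\eta$ is the same ``change of parameters'' device the paper invokes before Theorem~\ref{AHSPbil1}, and it is carried out correctly.
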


It turns out that the AHSP (see Definition~\ref{ahsp}) implies the characterization of Proposition~\ref{AHSP1}, as we show in the following theorem. This provide new examples of spaces $Y$ such that $(\ell_1^N; Y)$ has the {\bf L}$_{p, p}$ for linear operators.
In particular, if $Y$ is a uniformly convex Banach space, then the pair $(\ell_1^N; Y)$ has the {\bf L}$_{p, p}$, a result that was already proved in \cite[Proposition 2.10]{DKLM}.

\begin{theorem} Let $Y$ be a Banach space and $N \in \N$. If $Y$ has AHSP, then $(\ell_1^N; Y)$ has the {\bf L}$_{p, p}$.
\end{theorem}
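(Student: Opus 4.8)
The plan is to verify the characterization of Proposition~\ref{AHSP1}, since by that proposition it suffices to show: given $\e > 0$, a nonempty $A \subset \{1, \ldots, N\}$ and positive weights $(\alpha_j)_{j \in A}$ summing to $1$, there is $\eta > 0$ so that whenever $(y_j)_{j \in A} \subset S_Y$ has $\| \sum_{j \in A} \alpha_j y_j \| > 1 - \eta$, we can find $z^* \in S_{Y^*}$ and $(z_j)_{j \in A} \subset S_Y$ with $z^*(z_j) = 1$ and $\|z_j - y_j\| < \e$ for all $j \in A$. The natural source of $\eta$ is the AHSP itself, applied to the finite convex series determined by $(\alpha_j)_{j \in A}$. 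First I would extend the finite data $(y_j)_{j\in A}$, $(\alpha_j)_{j\in A}$ to an honest sequence in $S_Y$ and a convex series $\sum_{k=1}^\infty \alpha_k = 1$ (padding the unused indices with any fixed norm-one vector and zero coefficients, or simply reindexing $A$ as an initial segment). Then the hypothesis $\|\sum_{j\in A}\alpha_j y_j\| > 1 - \eta(\e)$ is exactly the hypothesis of Definition~\ref{ahsp}.

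The key point is to record what the AHSP conclusion gives and then upgrade it to the conclusion we want. The AHSP produces a subset $A' \subset A$ (after discarding the padded indices) with $\sum_{k \in A'} \alpha_k > 1 - \delta$, a single functional $z^* \in S_{Y^*}$, and points $\{z_k : k \in A'\}$ with $\|z_k - y_k\| < \delta$ and $z^*(z_k) = 1$. The gap to close is that AHSP only handles a \emph{large} subset $A'$, whereas Proposition~\ref{AHSP1} demands that the conclusion hold for \emph{all} $j \in A$. So the main obstacle is the handling of indices $j \in A \setminus A'$: for those we have no point $z_j$ and no control yet. The way to resolve this is to choose $\delta = \delta(\e, (\alpha_j)_{j\in A})$ small enough—depending on $\alpha := \min_{j \in A} \alpha_j$—that $\sum_{k \in A'} \alpha_k > 1 - \delta$ forces $A' = A$. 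Concretely, if $A' \ne A$ then $\sum_{k \in A'} \alpha_k \leq 1 - \alpha$, so taking any $\delta < \alpha$ yields a contradiction and guarantees $A' = A$; this is precisely where the dependence of $\eta$ on the convex series $(\alpha_j)_{j\in A}$ (and not merely on $\e$) is essential, and it is the reason this is a genuinely \emph{local} statement.

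Thus I would set $\eta(\e, (\alpha_j)_{j \in A}) := \min\{\alpha, \, \eta_{\mathrm{AHSP}}(\e)\}$, where $\eta_{\mathrm{AHSP}}$ is the modulus from the AHSP of $Y$ applied with the target accuracy $\e$, and $\alpha = \min_{j \in A}\alpha_j$. With this choice, the AHSP hypothesis holds, and smallness of $\eta$ relative to $\alpha$ forces the returned set to be all of $A$. The AHSP then directly furnishes the single $z^* \in S_{Y^*}$ together with $(z_j)_{j \in A} \subset S_Y$ satisfying $z^*(z_j) = 1$ and $\|z_j - y_j\| < \e$ for every $j \in A$, which is exactly the characterization in Proposition~\ref{AHSP1}. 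Since the AHSP modulus depends only on $\e$, the resulting $\eta$ depends only on $\e$ and on the convex weights $(\alpha_j)_{j \in A}$, as required. This completes the verification that $(\ell_1^N; Y)$ has the {\bf L}$_{p,p}$. I expect no serious analytic difficulty beyond the bookkeeping of extending to a sequence and the cardinality argument forcing $A' = A$; the heart of the matter is recognizing that the localized $\eta$ can absorb the weights and thereby eliminate the "large subset" weakness inherent in the AHSP conclusion.
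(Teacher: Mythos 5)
Your proposal follows the paper's proof exactly: invoke the AHSP at an accuracy small enough (relative to $\alpha:=\min_{j\in A}\alpha_j$) that the ``large subset'' returned by the AHSP is forced to be all of $A$, and then read off the characterization of Proposition~\ref{AHSP1}. The only thing to fix is the final formula $\eta(\e,(\alpha_j)_{j\in A}):=\min\{\alpha,\eta_{\mathrm{AHSP}}(\e)\}$: taking the minimum with $\alpha$ only shrinks the \emph{hypothesis} threshold, while the subset $A'$ produced by the AHSP invoked at target accuracy $\e$ still only satisfies $\sum_{k\in A'}\alpha_k>1-\e$, which does not force $A'=A$ when $\e\geq\alpha$ (e.g.\ with $\alpha_1=\alpha_2=1/2$ and $\e=0.6$ the AHSP may return $A'=\{1\}$, leaving you with no $z_2$). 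The cutoff must go \emph{inside} the AHSP modulus: pick $\lambda<\min\{\e,\alpha\}$ and set $\eta:=\eta_{\mathrm{AHSP}}(\lambda)$, which is exactly what your second paragraph (and the paper, with its parameter $\lambda$) already describes; with that correction the argument is complete.
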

\begin{proof}
Assume that $Y$ has AHSP with a function $\eta$ and fix  $\e>0$, a nonempty $A \subset \{1, \ldots, N \}$ and a sequence of positive numbers $(\alpha_j)_{j \in A}$ with $\sum_{j \in A} \alpha_j = 1$. Take $0<\lambda<\min\big\{\e,\min\{\alpha_j~:~j\in A\}\big\}$ and assume that a sequence of vectors $(y_j)_{j\in A}\subset S_Y$ satisfies
\begin{equation*}
\left\| \sum_{j \in A} \alpha_j y_j \right\| > 1 - \eta(\lambda).
\end{equation*}
By the definition of AHSP, there are $B\subset A$, $\{z_k : k\in B\}\subset S_X$ and $z^*\in S_{Y^*}$ such that
\begin{equation*}
\sum_{k\in B} \alpha_k>1-\lambda, \ \ \  \|z_k-x_k\|<\lambda, \ \ \  \mbox{and} \ \ \ x^*(z_k)=1
\end{equation*}
for all $k\in B$. Since $\lambda<\min\{\alpha_j~:~j\in A\}$, $A=B$. By Proposition \ref{AHSP1},  $(\ell_1^N; Y)$ has the {\bf L}$_{p, p}$.
\end{proof}

\begin{cor} Let $Y$ be a Banach space and $N \in \N$. If $Y$ is a
	\begin{itemize}
		\item[(a)] finite dimensional space or
		\item[(b)] uniformly convex space or
		\item[(c)] lush space,
\end{itemize}
then the pair $(\ell_1^N; Y)$ has the {\bf L}$_{p, p}$ for operators.
\end{cor}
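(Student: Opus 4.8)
The plan is to deduce this immediately from the theorem directly preceding it, which asserts that whenever $Y$ has the AHSP the pair $(\ell_1^N; Y)$ has property {\bf L}$_{p,p}$ for operators. Consequently, it suffices to verify that each of the three listed classes of Banach spaces has the AHSP, and then invoke that theorem in each case. In this sense the corollary carries no independent difficulty: all of the substantive work has already been done in the preceding theorem and, ultimately, in the characterization given in Proposition~\ref{AHSP1}.

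First I would recall the standard membership results for the AHSP that were already collected at the start of this section. For item (a), every finite dimensional Banach space has the AHSP; for item (b), every uniformly convex Banach space has the AHSP; these two facts are \cite[Propositions 3.5, 3.8]{AAGM}. For item (c), every lush space has the AHSP by \cite[Theorem 7]{CK}. Thus in each of the three cases the hypothesis of the preceding theorem is satisfied.

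Applying that theorem then yields directly that $(\ell_1^N; Y)$ has property {\bf L}$_{p,p}$ for operators in each case, which is the desired conclusion. The only point worth flagging is purely bookkeeping rather than a genuine obstacle: one must make sure the three cited classes are indeed the ones appearing in the definition of the AHSP used here (Definition~\ref{ahsp}), so that the cited results apply verbatim; since the definitions coincide, no reformulation is needed and the argument is complete.
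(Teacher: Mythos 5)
Your proposal is correct and matches the paper's (implicit) argument exactly: the corollary is an immediate consequence of the preceding theorem together with the facts, already recorded at the start of the section, that finite dimensional, uniformly convex, and lush spaces have the AHSP (\cite[Propositions 3.5, 3.8]{AAGM} and \cite[Theorem 7]{CK}). Nothing further is needed.
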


We also get a result about uniformly strongly exposed family. We say that a family $\{y_{\alpha}\}_{\alpha} \subset S_Y$ is {\it uniformly strongly exposed} with respect to a family $\left\{f_\alpha\right\}_\alpha\subset S_{Y^*}$, if there
is a function $\eps \in (0,1) \mapsto \delta(\eps)>0$ such that $f_\alpha(y_\alpha) = 1$ for all $\alpha$ and $\re f_\alpha(y)> 1-\delta(\eps)$ implies $\|y-y_\alpha\|<\eps$ whenever $y\in B_X$.

\begin{prop} \label{AHSP4} Let $Y$ be a Banach space and let $\{y_{\alpha}\}_{\alpha} \subset S_Y$ be a uniformly strongly exposed family with corresponding functionals $\{f_{\alpha}\}_{\alpha} \subset S_{Y^*}$. If $\{f_{\alpha}\}_{\alpha}$ is a norming subset for $Y$, then the pair $(\ell_1^N; Y)$ has the {\bf L}$_{p, p}$.	
\end{prop}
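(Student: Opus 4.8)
The plan is to verify the hypotheses of Proposition~\ref{AHSP1}, which characterizes when $(\ell_1^N; Y)$ has the {\bf L}$_{p,p}$ for operators. Concretely, I will fix $\e > 0$, a nonempty $A \subset \{1, \ldots, N\}$, and positive weights $(\alpha_j)_{j \in A}$ with $\sum_{j \in A} \alpha_j = 1$, and I must produce an $\eta = \eta(\e, (\alpha_j)_{j \in A}) > 0$ so that whenever $(y_j)_{j \in A} \subset S_Y$ satisfies $\big\| \sum_{j \in A} \alpha_j y_j \big\| > 1 - \eta$, there exist $z^* \in S_{Y^*}$ and $(z_j)_{j \in A} \subset S_Y$ with $z^*(z_j) = 1$ and $\|z_j - y_j\| < \e$ for all $j \in A$. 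The crucial difference from the AHSP setting is that here the \emph{same} exposing functional $z^*$ must work for \emph{all} indices simultaneously, and we are \emph{not} allowed to pass to a subset of $A$ (unlike in the definition of AHSP). This is exactly where the uniform strong exposedness, together with the norming assumption, will do the work.

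First I would use the norming hypothesis: since $\{f_\alpha\}_\alpha$ is norming for $Y$, given the near-norm-one convex combination $v := \sum_{j \in A} \alpha_j y_j$ there is some $\alpha$ with $\re f_\alpha(v) > 1 - \eta$ (after an appropriate modulus/rotation adjustment, taking $\eta$ slightly smaller to absorb the gap between the norming supremum and an attained value). I would then set $z^* := f_\alpha$ and $z_j := y_\alpha$ for the corresponding exposed point $y_\alpha$, expecting the convexity estimate to force \emph{each} $y_j$ to be close to $y_\alpha$. The key computation is the standard convex-combination trick already used in the proof of Proposition~\ref{SSDe}: from $\re f_\alpha\big(\sum_j \alpha_j y_j\big) > 1 - \eta$ and $\re f_\alpha(y_j) \leq 1$ for each $j$, one isolates a fixed index $k$ to get
\begin{equation*}
\re f_\alpha(y_k) > 1 - \frac{\eta}{\alpha_k} \geq 1 - \frac{\eta}{\min_{j \in A}\alpha_j}
\end{equation*}
for every $k \in A$. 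So if I choose $\eta := \big(\min_{j \in A}\alpha_j\big)\,\delta(\e)$, where $\delta$ is the modulus from the uniform strong exposedness, then $\re f_\alpha(y_k) > 1 - \delta(\e)$ for all $k$, and the defining property of the strongly exposed family yields $\|y_k - y_\alpha\| < \e$ for every $k \in A$.

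Putting this together, setting $z_k := y_\alpha$ and $z^* := f_\alpha$ gives $z^*(z_k) = f_\alpha(y_\alpha) = 1$ and $\|z_k - y_k\| < \e$ for all $k \in A$, which is precisely the condition in Proposition~\ref{AHSP1}; hence $(\ell_1^N; Y)$ has the {\bf L}$_{p,p}$ for operators. The main obstacle I anticipate is the interface between the \emph{norming} assumption and the need for an \emph{attained} (or near-attained) value of a single functional on $v$: a norming family only guarantees a supremum arbitrarily close to $\|v\|$, so I must be careful to shrink $\eta$ (e.g.\ replacing $\delta(\e)$ by $\delta(\e)/2$ and reserving the other half to pass from the supremum to an individual $f_\alpha$) and to handle the scalar rotation so that $\re f_\alpha(v)$, rather than merely $|f_\alpha(v)|$, is large. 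Everything else is the routine convexity estimate already appearing in Proposition~\ref{SSDe}, and the fact that a single $y_\alpha$ serves as the common nearby point for all indices is exactly what circumvents the subset-passing obstruction of the plain AHSP argument.
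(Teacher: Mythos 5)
Your proposal is correct and follows essentially the same route as the paper: reduce to Proposition~\ref{AHSP1}, pick $f_{\alpha_0}$ from the norming family with $\re f_{\alpha_0}\bigl(\sum_j \alpha_j y_j\bigr)>1-\eta$ where $\eta=\bigl(\min_j\alpha_j\bigr)\delta(\cdot)$, and run the convexity estimate to force every $\re f_{\alpha_0}(y_k)>1-\delta$. The only (harmless) difference is in the endgame: you take $z_k:=y_{\alpha_0}$ and $z^*:=f_{\alpha_0}$ with $\delta(\e)$, while the paper uses $\delta(\e/2)$, a triangle inequality to get $\|y_n-y_m\|<\e$, and sets $z_k:=y_{n_0}$ with a Hahn--Banach functional at $y_{n_0}$ --- your variant is, if anything, slightly cleaner.
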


\begin{proof} Let $N \in \N$ and $A \subset \{1, \ldots, N\}$ be a nonempty finite subset. Let $\e > 0$ and suppose there is $\alpha := (\alpha_{j})_{j \in A}$ such that $\alpha_j > 0$ for all $j \in A$ and $\sum_{j \in A} \alpha_j = 1$. Set $K_{\alpha} := \min \{ \alpha_j: j \in A \}$ and define
	\begin{equation*}
	\eta = \eta(\e, (\alpha_j)_{j \in A}) := K_{\alpha} \delta \left( \frac{\e}{2} \right) > 0,	
	\end{equation*}
where $\e \mapsto \delta(\e)$ is the function for the family $\{y_{\alpha}\}_{\alpha}$. Let $(y_j)_{j \in A} \subset S_Y$ be such that
\begin{equation*}
\left\| \sum_{j \in A} \alpha_j y_j \right\| > 1 - \eta.	
\end{equation*}	
Since $\{f_{\alpha}\}$ is norming for $Y$, we may take $\alpha_0$ to be such that
\begin{equation*}
\sum_{j \in A} \alpha_j \re f_{\alpha_0} (y_j)  = \re f_{\alpha_0} \left( \sum_{j \in A} \alpha_j y_j \right) > 1 - \eta.	
\end{equation*}	
Then, for each $i \in A$, we have
\begin{equation*}
1 - K_{\alpha} \delta \left(\frac{\e}{2}\right) <  \sum_{j \in A} \alpha_j \re f_{\alpha_0} (y_j)
\leq \sum_{j \in A \setminus \{i\}} \alpha_j + \alpha_i \re f_{\alpha_0} (y_i)
=1 - \alpha_1 + \alpha_i \re f_{\alpha_0}(y_i).
\end{equation*}
Therefore, for each $i \in A$, we have
\begin{equation*}
1 - \re f_{\alpha_0} (y_i) < \frac{K_{\alpha}}{\alpha_i} \delta \left(\frac{\e}{2} \right) \leq \delta \left(\frac{\e}{2} \right),	
\end{equation*}
which implies that $\re f_{\alpha_0} (y_i) > 1 - \left(\frac{\e}{2} \right)$ for every $i \in A$. So, we have that $\|y_i - y_{\alpha_0}\| < \frac{\e}{2}$ for all $i \in A$. Thus, for every $n, m \in A$, we have that
\begin{equation*}
\|y_n - y_m\| \leq \|y_n - y_{\alpha_0}\| + \|y_{\alpha_0} - y_m\| < \e.	
\end{equation*}
Since $A \not= \emptyset$, choose $n_0 \in A$ and set $z_j := y_{n_0}$ for all $j \in A$. Then, $z_j \in S_Y$ and $\|z_j - y_j\| = \|y_{n_0} - y_j\| < \e$ for all $j \in A$. Finally, take $y^* \in S_{Y^*}$ to satisfy $y^*(z_j) = y^*(y_{n_0}) = 1$. By Proposition \ref{AHSP1}, $(\ell_1^N; Y)$ has the {\bf L}$_{p, p}$.
\end{proof}










	

\end{document}